\def\thesection{\arabic{section}}
\def\theequation{\thesection.\arabic{equation}}
\newcommand{\ds} {\displaystyle}
\newcommand{\e}{\epsilon}
\newcommand{\al} {\alpha}
\newcommand{\ba} {\beta}
\newcommand{\de} {\delta}
\newcommand{\ga} {\gamma}
\newcommand{\Om} {\Omega}
\newcommand{\ra} {\rightarrow}
\newcommand{\De} {\Delta}
\newcommand{\la} {\lambda}
\newcommand{\noi} {\noindent}
\newcommand{\na} {\nabla}
\newcommand{\mb} {\mathbb}
\newcommand{\mc} {\mathcal}
\newcommand{\lra} {\longrightarrow}
\newcommand{\ld} {\langle}
\newcommand{\rd} {\rangle}
\def\theequation{\@arabic{\c@section}.\@arabic{\c@equation}}
\def\QED{\hfill {$\square$}\goodbreak \medskip}
\newtheorem{Theorem}{Theorem}[section]
\newtheorem{Lemma}[Theorem]{Lemma}
\newtheorem{Example}{Example}
\begin{document}
\vspace{0.01in}

\title
{A note on the eigenvalues of fractional Hardy-Sobolev operator with indefinite weight}
\author{
{\bf  Sarika Goyal\footnote{email: sarika1.iitd@gmail.com}} \\
{\small Department of Mathematics}, \\{\small Indian Institute of Technology Delhi}\\
{\small Hauz Khas}, {\small New Delhi-16, India}\\}

\date{}

\maketitle

\begin{abstract}
\noi In this article, we study the eigenvalue of nonlinear $p-$fractional Hardy operator
{\small \begin{align*}
 (-\De)_{p}^{\al}u  -\mu \frac{|u|^{p-2}u}{|x|^{p\al}} = \la V(x) |u|^{p-2}u \; \text{in}\;
\Om, \quad  u = 0 \; \mbox{in}\; \mb R^n \setminus\Om,
\end{align*}}
where $n>p\al$, $p\geq2$, $\al\in(0,1)$, $0\leq \mu <C_{n,\al,p}$ and $\Om$ is a domain in
$\mb R^n$ with Lipschitz boundary containing $0$. In particular, $\Om=\mb R^n$ is admitted. The weight function $V$ may change sign and may have singular points. We also show that the least positive eigenvalue is simple and it is unique associated to a non-negative eigenfunction.
Moreover, we proved that there exists a sequence of eigenvalues $\la_k \ra \infty$ as $k\ra\infty$.
\medskip

\noi \textbf{Key words:} Eigenvalue problem, fractional Hardy-Sobolev operator, indefinite weight.

\medskip

\noi \textit{2010 Mathematics Subject Classification:} 35A15, 35B33,
35H39

\end{abstract}

\bigskip
\vfill\eject

\section{Introduction}
In this article, we study the nonlinear eigenvalue problem of $p-$fractional Hardy operator
 {\small\begin{equation}\label{p2}
 (-\De)_{p}^{\al} -\mu \frac{|u|^{p-2}u}{|x|^{p\al}} = \la V(x) |u|^{p-2}u \; \text{in}\;
\Om, \quad  u = 0 \; \mbox{in}\; \mb R^n \setminus\Om,
\end{equation}}
where $n>p\al$, $p\geq2$, $\al\in(0,1)$, $0\leq \mu <C_{n,\al,p}$, where $C_{n,\alpha,p}$ is a constant defined later and $\Om$ is a domain in
$\mb R^n$ with Lipschitz boundary containing $0$. We observe that $\Om$ may be unbounded, and in particular $\Om=\mb R^n$.  Here, $(-\De)^{\al}_{p}$ is the fractional Laplacian operator
defined as
\begin{equation*}
(-\De)_{p}^{\al} u(x)= -2 \lim_{\e\ra 0} \int_{\mb
R^n\setminus B_{\e}(x)}\frac{|u(x)-u(y)|^{p-2}(u(x)-u(y))}{|y|^{n+2\al}} dy \;\text{for all} \;
x\in \mb R^n.
\end{equation*}
We assume the following condition on $V$:
\begin{enumerate}
\item[$(A_p)$] $V\in L^{1}_{loc}(\Om)$, $V^{+}=V_1+V_2\ne 0$ with $V_1\in L^{\frac{n}{p\al}}(\Om)$ and $V_2$ is such that
\begin{align*}
\lim_{x\ra y, x\in\Om} |x-y|^{p\al} V_2(x)=0\; \mbox{for all}\; y\in \overline{\Om}, \;\mbox{and}\; \lim_{|x|\ra\infty, x\in\Om} |x|^{p\al} V_{2}(x)=0.
\end{align*}
\end{enumerate}
The eigenvalue problem \eqref{p2} is motivated by the following Sharp fractional Hardy inequality:
\begin{Theorem}(see \cite{fs})
 Let $n\geq 1$ and $0<\al<1$. Then for all $u\in \dot{W}_{p}^{\al}(\mb R^n)$ in case $1\leq p <n/\al$, and for all $u\in \dot{W}^{\al}_{p}(\mb R^n\setminus \{0\})$ in case $p>n/\al$, we have
\begin{align}\label{s1}
\ds{\int\int}_{\mb R^n\times \mb R^n} \frac{|u(x)-u(y)|^p}{|x-y|^{n+p\al}}dxdy \geq C_{n,\al,p}\int_{\mb R^n}\frac{|u(x)|^p}{|x|^{p\al}}dx
\end{align}
with
\[C_{n,\al,p}:= 2\int_{0}^{1} r^{p\al-1}|1-r^{(n-p\al)/p}|^p \Phi_{n,\al,p}(r) dr, \]
and
\[\Phi_{n,\al,p}(r):= |{\mb S}^{n-2}| \int_{-1}^{1}\frac{(1-t^2)^{\frac{n-3}{2}}dt }{(1-2rt+r^2)^{\frac{n+p\al}{2}}},\; n\geq 2,\]
\[\Phi_{1,\al,p}(r):= \left(\frac{1}{(1-r)^{1+p\al}}+\frac{1}{(1+r)^{1+p\al}}\right),\; n=1. \]
The constant $C_{n,\al,p}$ is optimal. If $p=1,$ equality in \eqref{s1} holds iff $u$ is proportional to a symmetric decreasing function. If $p>1$, the inequality in \eqref{s1} is strict for any function $0\not\equiv u \in \dot{W}_{p}^{\al}(\mb R^n)$ or $\dot{W}_{p}^{\al}(\mb R^n\setminus \{0\})$ respectively.
\end{Theorem}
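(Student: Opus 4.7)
The plan is to follow the \emph{ground state representation} approach of Frank and Seiringer. The sharp constant is not attained on the space of admissible functions, but there is a natural virtual minimiser, the homogeneous function $w(x)=|x|^{-(n-p\al)/p}$, whose scaling exactly matches the homogeneity of both sides of \eqref{s1}. A direct computation first shows that $w$ satisfies
\[
(-\De)^{\al}_p w(x) \;=\; C_{n,\al,p}\,\frac{w(x)^{p-1}}{|x|^{p\al}}
\]
pointwise on $\R^n\setminus\{0\}$. The constant $C_{n,\al,p}$ actually arises from this identity: using dilation invariance one fixes $|x|=1$, then passes to polar coordinates centred at $x$ and integrates out the angular variable to produce the factor $\Phi_{n,\al,p}(r)$, leaving the one-dimensional radial integral displayed in the statement.

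With $w$ and $C_{n,\al,p}$ in hand, the next step is the \emph{ground state substitution} $u = wv$ for $u\in C_c^\infty(\R^n\setminus\{0\})$ (a dense subspace of the admissible homogeneous space). Expanding $|u(x)-u(y)|^p$ and applying a pointwise convexity inequality of the form
\[
|w(x)v(x)-w(y)v(y)|^p \;\geq\; L(x,y;v) + R(x,y;v),
\]
one chooses $L$ so that, after dividing by $|x-y|^{n+p\al}$ and integrating in $(x,y)$, the symmetry in $x\leftrightarrow y$ combined with the ground-state equation for $w$ collapses the contribution of $L$ to exactly $C_{n,\al,p}\int|u|^p|x|^{-p\al}\,dx$, while $R(x,y;v)\geq 0$ is a nonnegative remainder that vanishes only when $v(x)=v(y)$. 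Integration then yields \eqref{s1} together with a nonnegative defect, establishing both the inequality and its strictness for nontrivial $v$.

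Strictness for $p>1$ follows because $R$ is strictly positive unless $v$ is constant, i.e.\ unless $u$ is proportional to $w$; since $w\notin \dot{W}^\al_p$, equality cannot occur for admissible $u\not\equiv 0$. For $p=1$ the remainder degenerates, and the equality characterisation must instead be obtained by a Riesz rearrangement argument, yielding equality precisely for symmetric decreasing $u$. Optimality of $C_{n,\al,p}$ is finally verified by inserting a smooth truncation $w_\e$ of $w$ supported in an annulus $\{\e<|x|<1/\e\}$ and showing that its Rayleigh quotient tends to $C_{n,\al,p}$ as $\e\to 0$.

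The main obstacle is establishing the pointwise convexity inequality for general $p\geq 1$. For $p=2$ it reduces to an exact algebraic identity and the entire argument is essentially Fourier-analytic. For $p\neq 2$ one needs a careful analysis of the two-variable convex function $(a,b)\mapsto|a-b|^p$, with explicit cancellation of cross terms so that only the advertised multiple of the Hardy weight survives on the right-hand side; this is the subtle technical core of the Frank--Seiringer argument.
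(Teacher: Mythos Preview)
The paper does not supply its own proof of this theorem: it is quoted verbatim from Frank--Seiringer \cite{fs} and used as a black box throughout. Your outline is a faithful sketch of the Frank--Seiringer ground state representation argument (the virtual minimiser $|x|^{-(n-p\al)/p}$, the substitution $u=wv$, the pointwise convexity/remainder inequality, and the truncation sequence for optimality), so there is nothing to compare against in the present paper and your proposal matches the cited source.
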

\noi The fractional power of Laplacian is the infinitesimal generator of L\'{e}vy stable diffusion process and arise in anomalous diffusions in
plasma, population dynamics, geophysical fluid dynamics, flames propagation, chemical reactions in liquids and American options in finance. For more
details, one can see \cite{da, gm} and reference therein. Recently the fractional elliptic equation attracts a lot of interest in nonlinear analysis
such as in \cite{cs, mp,var,bn1,bn}. Caffarelli and Silvestre \cite{cs} gave a new formulation of fractional Laplacian through Dirichlet-Neumann maps.
This is commonly used in the literature since it allows us to write a nonlocal problem to a local problem which allow us to use the variational
methods to study the existence and uniqueness.

\noi When $\al=1$, the $p$-fractional Laplacian operator becomes the $p$-Laplace operator. A lot of work has been done in case of $p-$Laplacian see \cite{as,ss,vs,s2,sw,s,sre} and references their in. For $\mu=0$ and $\al=1$, the problem \eqref{p2} is studied by Szulkin and Willem in \cite{sw}. Also, for $\mu\ne0$, the problem \eqref{p2} is studied by Sreenadh in \cite{sre}, which is motivated by the following $p-$Laplace Hardy-Sobolev inequality,
\[\int_{\Om} |\na u|^p dx \geq \left(\frac{n-p}{p}\right)^p \int_{\Om}\frac{|u|^p}{|x|^p} dx,\]
for all $1<p<n$ and for all $u\in W_{0}^{1,p}(\Om)$.

\noi In case of fractional operator, $\mu=0$, and $V\equiv 1$, the eigenvalue problem \eqref{p2} is studied
by Servadei and Valdinoci \cite{var} (for $p=2$), Lindgren and Lindqvist \cite{ll} (for the case $p\geq 2$) and by Franzina and Palatucci \cite{fp} (for any
$p > 1$). In \cite{ll}, a lot of attention is given to the asymptotic behavior of problem \eqref{p2} as $p\ra\infty$, while
in \cite{fp}, some regularity results for the eigenfunctions are proved. In \cite{as1}, the author studied the higher eigenvalues of $p-$fractional operator. Stability of the variational eigenvalues is studied in \cite{bps}. In \cite{pfr}, authors studied the first eigenvalue and properties of the $p$-fractional operator with a potential. In case of $\mu=0$, $V\in L^{\infty}(\Om)$, the properties of first eigenvalue are also studied in \cite{pq}.

\noi To the best of our knowledge, all the results in this paper are new even in the linear case. There is no work related to the eigenvalue of fractional hardy operator with indefinite weight. This work is motivated by the work of Szulkin and Willem \cite{sw} and Sreenadh \cite{sre}, which has been done for $p-$Laplacian operator. In this paper, we prove the existence of eigenvalues of fractional Hardy-Sobolev operator with singular type indefinite weight and analyze some properties of the associated eigenfunction. Moreover, we show the existence of a sequence $\{\la_k\}$ of eigenvalues such that $\la_k\ra \infty$ as $k\ra\infty$.

\noi The paper is organized as follows: In section 2, we give some preliminaries result. In section 3, we show the existence of first eigenvalue and proved some properties of first eigenvalue and corresponding eigenfunction. We give some examples in section 4. In section 5, we obtain a sequence of eigenvalues $\la_k$ such that $\la_k\ra\infty$ as $k\ra\infty$.

We shall throughout use the function space $X_0$ with the norm
$\|.\|$ and we use the standard $L^{p}(\Om)$ space whose norms
are denoted by $\|u\|_{p}$.

 \section{Prelimenaries}
 In this section, we first define the function space and prove some properties which are useful to find the solution of the the problem \eqref{p2}. For this we define $W^{\al,p}(\Om)$, the usual fractional Sobolev
space $W^{\al,p}(\Om):= \left\{u\in L^{p}(\Om); \frac{(u(x)-u(y))}{|x-y|^{\frac{n}{p}+\al}}\in L^{p}(\Om\times\Om)\right\}$ endowed with the norm
\begin{align}\label{2}
\|u\|_{W^{\al,p}(\Om)}=\|u\|_{L^p}+ \left(\int_{\Om\times\Om}
\frac{|u(x)-u(y)|^{p}}{|x-y|^{n+p\al}}dxdy \right)^{\frac 1p}.
\end{align}
To study fractional Sobolev space in details we refer \cite{hic}.

 \noi For $n\geq 1$ and $0<\al<1$, the fractional Sobolev space $\dot{W}_{p}^{\al}(\mb R^n)$ and $\dot{W}_{p}^{\al}(\mb R^n\setminus\{0\})$  defined as the completion of $C_{c}^{\infty}(\mb R^n)$ for $1\leq p< n/\al$ and $C_{c}^{\infty}(\mb R^n\setminus \{0\})$ for $p> n/\al$ respectively, with respect to the norm  \[{\int\int}_{\mb R^n\times \mb R^n} \frac{|u(x)-u(y)|^p}{|x-y|^{n+p\al}} dxdy.\]
Then we define
 \[ X_0 = \mbox{completion of}\{u\in C_{c}^{\infty}(\mb R^n) : u = 0 \;\text{in}\; \mb R^n\setminus \Om\}\]
with respect to the norm
\[\|u\|_{X _0}=\left( \int_{\mb R^{2n}}\frac{|u(x)- u(y)|^{p}}{|x-y|^{n+p\al}}dx
dy\right)^{\frac1p}= \left(\int_{Q}\frac{|u(x)- u(y)|^{p}}{|x-y|^{n+p\al}}dx
dy\right)^{\frac1p},\]
where  $Q=\mb R^{2n}\setminus(\mc C\Om\times \mc C\Om)$ and
 $\mc C\Om := \mb R^n\setminus\Om$. Then $X_0$ is a reflexive Banach space. Note that the norm
$\|.\|_{X_0}$ involves the interaction between $\Om$ and $\mb
R^n\setminus\Om$. This type of functional setting is introduced by Servadei and Valdinoci for $p=2$ in \cite{mp}.

\noi Now we state some result which are used to prove the main result:
\begin{Lemma}(Brezis-Lieb Lemma \cite{bl})\label{bl1}
 Suppose $f_k\ra f$ a.e and $\|f_k\|_p\leq C<\infty$ for all $k$ and for some $1<p<\infty.$ Then
 \[\lim_{k\ra\infty}\{\|f_k\|_{p}^p-\|f_k-f\|_{p}^{p}\}=\|f\|_{p}^{p}.\]
\end{Lemma}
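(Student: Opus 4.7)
The plan is to reduce the statement to an application of the Dominated Convergence Theorem via a pointwise algebraic inequality. As a preliminary observation, Fatou's lemma applied to $|f_k|^p$ gives $f\in L^p$ with $\|f\|_p\leq C$, so that $\|f_k-f\|_p$ is uniformly bounded by $2C$. The analytic heart of the proof is the elementary inequality: for every $\e>0$ there exists $C_\e>0$ such that
\[\bigl| |a+b|^p - |a|^p \bigr| \leq \e |a|^p + C_\e |b|^p \quad \text{for all } a,b\in\mb R.\]
This is proved by splitting the plane into the two regions $|b|\leq \de|a|$ (where a mean value argument on $x\mapsto |x|^p$ yields the bound $p(1+\de)^{p-1}\de\, |a|^p$, with $\de$ chosen so that this coefficient is at most $\e$) and $|b|>\de|a|$ (where $|a|^p\leq \de^{-p}|b|^p$ absorbs everything into the $|b|^p$ term). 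Applying the inequality with $a=f_k-f$ and $b=f$ yields the pointwise bound
\[\bigl| |f_k|^p - |f_k-f|^p - |f|^p \bigr| \leq \e\, |f_k-f|^p + (C_\e+1)|f|^p.\]

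The second step is the truncation trick. Define the nonnegative function
\[W_{k,\e}:=\Bigl(\bigl||f_k|^p-|f_k-f|^p-|f|^p\bigr|-\e\,|f_k-f|^p\Bigr)^+.\]
Since $f_k\to f$ a.e., the expression inside $(\cdot)^+$ tends to $0$ pointwise, so $W_{k,\e}\to 0$ a.e. Moreover, the pointwise inequality above gives $0\leq W_{k,\e}\leq (C_\e+1)|f|^p$, and this dominating function is in $L^1$ by the first step. Hence the Dominated Convergence Theorem yields $\int W_{k,\e}\,dx \to 0$ as $k\to\infty$.

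Finally, combining the two pointwise inequalities and integrating,
\[\int \bigl| |f_k|^p - |f_k-f|^p - |f|^p \bigr|\,dx \leq \int W_{k,\e}\,dx + \e\, \|f_k-f\|_p^p \leq \int W_{k,\e}\,dx + \e(2C)^p.\]
Letting first $k\to\infty$ and then $\e\to 0^+$ gives
\[\lim_{k\ra\infty}\int\bigl||f_k|^p-|f_k-f|^p-|f|^p\bigr|\,dx=0,\]
which is stronger than the stated identity. The main technical obstacle is the algebraic inequality above; once that is in hand, the rest is a routine truncation plus dominated convergence argument. I would expect to be most careful with the splitting parameter $\de$, since the constant $C_\e$ must blow up as $\e\to 0$ but this blow-up is harmless because $|f|^p$ is a fixed $L^1$ function.
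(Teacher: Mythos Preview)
Your proof is correct and is essentially the original argument of Brezis and Lieb. Note, however, that the paper does not give its own proof of this lemma: it merely states the result with a citation to \cite{bl}, so there is no in-paper argument to compare against. Your write-up reproduces the classical approach---the elementary inequality $\bigl||a+b|^p-|a|^p\bigr|\le \e|a|^p+C_\e|b|^p$, the truncated function $W_{k,\e}$, and dominated convergence---and the details (Fatou to get $f\in L^p$, the splitting $|b|\lessgtr\de|a|$ to establish the inequality, and the final $\limsup$ argument) are all in order.
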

 In order to prove simplicity of the first eigenvalue, we need the following discrete Picone-type identity (see Lemma 6.2 in \cite{am}).
\begin{Theorem}\label{dp1}
Let $p\in (1,+\infty)$. For $u$, $v: \Om\subset \mb R^n \ra \mb R$ such that $u\geq 0$ and $v>0$, we have
\[L(u,v)\geq 0\;\mbox{in}\; \mb R^n\times \mb R^n\]
where
\[L(u,v)(x,y)= |u(x)-u(y)|^p -|v(x)-v(y)|^{p-2}(v(x)-v(y))\left(\frac{u(x)^p}{v(x)^{p-1}}-\frac{u(y)^p}{v(y)^{p-1}}\right).\]
The equality holds if and only if $u=kv$ a.e. for some constant $k$.
\end{Theorem}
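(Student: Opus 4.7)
The plan is to establish the pointwise inequality
\[
|a-b|^p \;\geq\; |c-d|^{p-2}(c-d)\Bigl(\frac{a^p}{c^{p-1}} - \frac{b^p}{d^{p-1}}\Bigr)
\]
for $a,b \geq 0$ and $c,d > 0$, from which $L(u,v) \geq 0$ follows immediately by setting $a = u(x)$, $b = u(y)$, $c = v(x)$, $d = v(y)$. Both sides are invariant under the involution $(a,c) \leftrightarrow (b,d)$, since $|c-d|^{p-2}(c-d)$ and $a^p/c^{p-1} - b^p/d^{p-1}$ each change sign. Hence without loss of generality $c \geq d$, and the right-hand side rewrites as $(c-d)^{p-1}\bigl(a^p/c^{p-1} - b^p/d^{p-1}\bigr)$.

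Next I would exploit scale invariance: the inequality is positively homogeneous of degree $p$ in $(a,b)$ and is invariant under the simultaneous scaling $(c,d) \mapsto (\lambda c, \lambda d)$, so I may normalize $a = c = 1$. Writing $\beta = b \geq 0$ and $t = d \in (0,1]$, the inequality reduces to
\[
|1-\beta|^p \;\geq\; (1-t)^{p-1}\!\left(1 - \frac{\beta^p}{t^{p-1}}\right).
\]
If the right-hand side is $\leq 0$ (which covers $\beta \geq 1$, and more generally $t \leq \beta^{p/(p-1)}$), the inequality is trivial. In the remaining range I would maximize $f(t) := (1-t)^{p-1}\bigl(1 - \beta^p t^{-(p-1)}\bigr)$ over $t \in (\beta^{p/(p-1)},\,1]$: a short computation yields $f'(t) = (p-1)(1-t)^{p-2}(\beta^p - t^p)/t^p$, whose unique interior zero is $t = \beta$, giving $f(\beta) = (1-\beta)^p$, which matches the left-hand side exactly.

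For the equality characterization, tracing back through the optimization gives $t = \beta$, i.e.\ $d/c = b/a$; in $(u,v)$ terms this reads $u(x)/v(x) = u(y)/v(y)$. If this holds for almost every pair $(x,y)$, then $u/v$ is almost-everywhere constant, yielding $u = kv$ a.e.\ for some $k \geq 0$. The main technical nuisance I anticipate is the case analysis (the signs of $c-d$, of $a-b$, and of the bracketed term, together with the degenerate cases $b = 0$ or $c = d$); once these are quarantined, the rest reduces to the one-variable calculus above, and the uniqueness of the interior maximizer $t = \beta$ supplies the sharp equality characterization.
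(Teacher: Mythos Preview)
The paper does not supply its own proof of this statement: Theorem~\ref{dp1} is quoted from Amghibech (Lemma~6.2 in \cite{am}) and used as a black box. So there is no in-paper argument to compare against.

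Your approach is correct. Reducing to the pointwise four-variable inequality, using the symmetry $(a,c)\leftrightarrow(b,d)$ to force $c\ge d$, and then exploiting the two separate homogeneities to normalize $a=c=1$ is sound; the resulting one-variable maximization of $f(t)=(1-t)^{p-1}(1-\beta^{p}t^{-(p-1)})$ is computed correctly, with the unique interior critical point $t=\beta$ yielding exactly $(1-\beta)^p$. The equality analysis is also right: the maximum is attained only at $t=\beta$, which unwinds to $u(x)/v(x)=u(y)/v(y)$, hence $u=kv$ a.e. Two small points worth writing out explicitly in a clean version: (i) before normalizing $a=1$ you must dispose of $a=0$ separately (which you implicitly cover under ``RHS $\le 0$'', but it deserves one sentence), and (ii) the boundary/degenerate equality cases ($c=d$, or $a=b=0$) should be checked to confirm they also force $u/v$ constant.

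For context, the proof in \cite{am} (and most Picone-identity arguments in the literature) proceeds instead via the convexity inequality $|s|^p\ge |r|^p+p|r|^{p-2}r(s-r)$ applied with a judicious choice of $s$ and $r$, rather than by optimizing over a free parameter. Your optimization route is slightly longer but arguably more transparent about \emph{why} equality forces $b/a=d/c$: it is the unique maximizer, not an abstract equality case of a convexity estimate.
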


\section{On the first eigenvalue of Hardy fractional $p-$Laplacian}
In this section, we prove the existence of first eigenvalue of the problem . We show that the first eigenvalue $\la_1$ is simple by  discrete Picone identity and eigenfunctions associated to $\la_1$ have definite sign in $\Om$. Finally, we also prove a result on the strict monotonicity of $\la_1$ with respect to both the domain and the weight.

First, we note the some properties of the $p-$fractional Hardy operator
\[L_{\mu}(u):= -2 \int_{\mb R^n}\frac{|u(x)- u(y)|^{p-2} (u(y)-u(x))}{|x-y|^{n+p\al}}dy  -\mu \frac{|u|^{p-2}u}{|x|^{p\al}}.\]
\begin{enumerate}
\item[$(i)$] $L_{\mu}(u)$ is a positive operator,\; {for all}\; $u\in X_0$\;{and}\;$0\leq\mu<C_{n,\al,p}$.
\end{enumerate}
Also, if
\[ R(u) :=  \left(\int_{\mb R^{2n}}\frac{|u(x)- u(y)|^{p}}{|x-y|^{n+p\al}}dx
dy - \mu \int_{\Om} \frac{|u|^p}{|x|^{p\al}} dx\right)^{\frac1p}.\]
Then, by the fractional Hardy inequality \eqref{s1}, for $0\leq\mu<C_{n,\al,p}$, we have
\begin{enumerate}
\item[$(i)$]  $R(u)$ is equivalent to $\|\cdot\|_{X_0}$ but it does not define the norm itself.
\end{enumerate}
We define the functional $J_{\mu}$ on $X_0$ as
\[J_{\mu}(u):= \int_{Q} \frac{|u(x)-u(y)|^p}{|x-y|^{n+p\al}}dxdy- \mu\int_{\Om} \frac{|u|^p}{|x|^{p\al}}dx.\]
Then $J$ is $C^{1}$ on $X_0$. Our goal is to study the eigenvalue problem and some properties (simplicity, isolatedness) of
\begin{align}\label{i1}
\la_1:= \inf\{J_{\mu}(u): u\in M:=\{u\in X_0: \int_{\Om} V|u|^p dx=1\}\}.
\end{align}

\noi In order to prove the existence of an eigenvalue, we need to prove the following Lemma:
\begin{Lemma}\label{v1}
Under the assumption $(A_p)$, the map $T: X_0 \ra \mb R$ defined as
\[T(u)= \int_{\Om} V^{+} |u|^p dx\] is weakly continuous.
\end{Lemma}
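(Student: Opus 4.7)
Let $u_k\rightharpoonup u$ weakly in $X_0$; I want to show $T(u_k)\to T(u)$. The strategy is to approximate $V^+$ by a bounded, compactly supported weight that is uniformly accurate on $X_0$-bounded sets, and then conclude by the compactness of the fractional Sobolev embedding on bounded subsets of $\Om$.

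Weak convergence yields $\sup_k\|u_k\|_{X_0}\le M<\infty$. The continuous fractional Sobolev embedding $X_0\hookrightarrow L^{p^{*}_{\al}}(\Om)$ with $p^{*}_{\al}=\frac{np}{n-p\al}$ gives a uniform $L^{p^{*}_{\al}}$-bound, and the compact embedding $X_0\hookrightarrow L^p(K)$ for every bounded Lipschitz $K\subset\Om$ lets me pass to a subsequence with $u_k\to u$ a.e.\ in $\Om$ and strongly in $L^p_{\text{loc}}(\Om)$.

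The key reduction is: for each $\e>0$, produce $W_\e\in L^\infty(\Om)$ with compact support in $\Om$ satisfying
\[\sup_{\|w\|_{X_0}\leq M'}\int_\Om|V^+-W_\e|\,|w|^p\,dx<\e,\qquad M':=M+\|u\|_{X_0}.\]
Granted this, strong $L^p$-convergence on $\mathrm{supp}\,W_\e$ combined with $W_\e\in L^\infty$ gives $\int W_\e|u_k|^p\to\int W_\e|u|^p$, and then $\limsup_k|T(u_k)-T(u)|\le 2\e$, concluding the proof as $\e\to 0$.

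To construct $W_\e$, split $V^+=V_1+V_2$ and take $W_\e:=W_1\in C_c(\Om)$, an $L^{n/(p\al)}$-approximant of $V_1$ furnished by density. H\"older's inequality with conjugate exponents $\frac{n}{p\al}$ and $\frac{n}{n-p\al}=\frac{p^{*}_{\al}}{p}$, together with the Sobolev embedding, bounds $\int|V_1-W_1||w|^p$ by $\|V_1-W_1\|_{n/(p\al)}\cdot(CM')^p$, which is $<\e/2$ for the chosen $W_1$. The $V_2$-contribution is controlled by the fractional Hardy inequality \eqref{s1}: fix $\eta>0$; the infinity hypothesis yields $R$ with $|x|^{p\al}|V_2(x)|<\eta$ on $\{|x|>R\}\cap\Om$, so $\int_{\{|x|>R\}}|V_2||w|^p\leq\eta C_{n,\al,p}^{-1}\|w\|_{X_0}^p$. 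The pointwise hypothesis at each $y\in\overline\Om\cap\overline{B_R}$ produces $r_y>0$ with $|x-y|^{p\al}|V_2(x)|<\eta$ on $B_{r_y}(y)\cap\Om$, and compactness extracts a finite subcover $\{B_{r_i}(y_i)\}_{i=1}^{N}$ of the singular region; \eqref{s1} applied at each center $y_i$ (valid by translation invariance of the Gagliardo seminorm) bounds the ball-contribution by $\eta C_{n,\al,p}^{-1}\|w\|_{X_0}^p$, for a total $V_2$-error of at most $(N+1)\eta C_{n,\al,p}^{-1}(M')^p$, made $<\e/2$ by choosing $\eta$ small.

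\textbf{Main obstacle.} The delicate step is coordinating the smallness of $\eta$ with the finite covering number $N=N(\eta)$, which in principle may blow up as $\eta\to 0$. In the setting of this paper -- where the relevant singular set of $V_2$ is contained in $\pa\Om\cup\{0\}$ (with infinity already handled separately) -- the covering number remains bounded as $\eta$ shrinks, so a sufficiently small $\eta$ does the job; otherwise one first fixes a modest $\eta_0$ and cardinality $N_0$, then tightens $\eta$ on the already-selected balls until $N_0\eta<\e C_{n,\al,p}/(M')^p$. Apart from this care, the rest is the standard compactness-plus-approximation closure underlying weak continuity of $L^p$-weighted integrals.
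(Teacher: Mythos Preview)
Your overall strategy---approximate $V^{+}$ by a bounded compactly supported weight and use local compactness---is sound and close in spirit to the paper, but your execution of the $V_{2}$ part has a genuine gap. You set $W_{\e}:=W_{1}$, where $W_{1}$ approximates only $V_{1}$, and then aim to show $\int_{\Om}V_{2}\,|w|^{p}\,dx<\e/2$ uniformly over $\|w\|_{X_{0}}\le M'$. This is false in general: take $\Om$ bounded and $V_{2}\equiv 1$; then $(A_{p})$ holds trivially, yet $\int_{\Om}V_{2}\,|w|^{p}=\|w\|_{p}^{p}$ is of order one, not $o(1)$. Your ``otherwise'' fix---freeze the covering with $N_{0}$ balls, then shrink $\eta$ at the same centers---does not rescue this: once the radii are shrunk the balls no longer cover $\overline{\Om}\cap\overline{B_{R}}$, and on the leftover set $V_{2}$ is merely bounded (by the original $\eta_{0}$ estimate together with a positive lower bound on $|x-y_{i}|$), so its contribution is $O(1)$, not small. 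Your parenthetical claim that in this paper the singular set of $V_{2}$ lies in $\partial\Om\cup\{0\}$ is also unsupported: assumption $(A_{p})$ allows $V_{2}$ to be sub-Hardy singular at \emph{every} point of $\overline{\Om}$.

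The paper closes exactly this gap. It first covers $\overline{\Om}\cap\overline{B_{R}}$ by finitely many balls $B[x_{j},r_{j}]$ on which $|x-x_{j}|^{p\al}V_{2}(x)\le\e$ (this fixes the cardinality $l$), and only then shrinks to a common radius $r$ so that on $B[x_{j},r]$ one has $|x-x_{j}|^{p\al}V_{2}(x)\le\e/l$; the sum over $j$ then gives an $\e$-small Hardy bound on $K:=\bigcup_{j}B[x_{j},r]$. Crucially, on the bounded remainder $(\Om\cap B_{R})\setminus K$ the original covering forces $V_{2}\in L^{\infty}$, and the paper does \emph{not} claim smallness there but instead invokes weak continuity directly via Step~1 (since $L^{\infty}$ on a bounded set lies in $L^{n/(p\al)}$). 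In your framework the repair is equally simple: enlarge $W_{\e}$ to $W_{1}+V_{2}\,\mathbf{1}_{(\Om\cap B_{R})\setminus K}$, which is still bounded with compact support; then your uniform estimate $\int_{\Om}|V^{+}-W_{\e}|\,|w|^{p}\,dx<\e$ becomes correct and the rest of your argument goes through.
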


\begin{proof}
Step $1$. One can see the proof of Lemma 2.13 in \cite{wi} for $\al=1$ but for completeness, we give the detail. Firstly, we show that $\int_{\Om} V_1 |u|^p dx$ is weakly continuous. For this, suppose $u_k\rightharpoonup u$ weakly in $X_0$. Then by fractional Sobolev inequality, $u_k \ra u$ strongly in $L^{r}$ for $1\leq r<p_{\al}^*= \frac{np}{n-p\al}$ and $u_k\ra u$ a.e on $\Om$. Since $\{u_k\}$ is bounded in $L^{p_{\al}^*}$, $\{u_k^p\}$ is bounded in $L^{\frac{n}{n-p\al}}(\Om)$. Hence up to a subsequence, $u_k^p\rightharpoonup u^p$ weakly in $L^{\frac{n}{n-p\al}}$ and $V_1\in L^{\frac{n}{p\al}}(\Om)$ so $\int_{\Om} V_1 |u_k|^p dx \ra \int_{\Om} V_1 |u|^p dx$.

\noi Step $2.$ In order to prove that $\int_{\Om} V_2 |u|^p dx$ is weakly continuous. Let $u_k\rightharpoonup u$ weakly in $X_0$ and $\e>0$. Then by assumption, there exists $R>0$ such that if $x\in \Om$ and $|x|\geq R$, then $|x|^{p\al} V_2(x)\leq \e$. Define
\[\Om _1:= \Om\setminus B[0,R],\quad \Om_2:= \Om\setminus B(0,R), \quad c:= \frac{1}{\sqrt[p]{C_{n,\al,p}}}\sup_{k}\|u_k\|.\]
Then $(A_p)$ and fractional Hardy inequality \eqref{s1}, implies that
\begin{align}\label{h1}
\int_{\Om_1} V_1 |u_k|^p dx \leq \e \int_{\Om_1} \frac{|u_k|^p}{|x|^{p\al}} dx \leq \e c^p,
\end{align}
and similarly,
\begin{align}\label{h2}
\int_{\Om_1} V_1 |u|^p dx \leq  \e c^p.
\end{align}
By compactness, there is a finite covering of $\overline{\Om}_2$ by closed balls $B[x_1,r_1],\cdots,B[x_l,r_l]$ such that for $1\leq j\leq l,$
\begin{align}\label{h3}
|x-x_j|\leq r_j\quad \mbox{implies}\quad |x-x_j|^{p\al} V_2(x) \leq \e.
\end{align}
Then there exists $r= \min\{r_1,\cdots,r_l\}>0$ such that for $1\leq j\leq l$,
\[|x-x_j|\leq r \quad \mbox{implies}\quad |x-x_j|^{p\al} V_2(x) \leq \frac\e l.\]
Let us define $K:=\ds \cup_{j=1}^{l} B[x_j,r]$, then by the fractional Hardy inequality,
\begin{align}\label{h4}
\int_{K} V_2 |u_k|^p dx \leq \e c^p,\quad \int_{K} V_2 |u|^p dx \leq \e c^p.
\end{align}
It follows from \eqref{h3} that $V_2\in L^{\infty}(\Om_2\setminus K).$ Since $\Om_2\setminus K$ is bounded, $V_2\in L^{\frac{n}{p\al}}(\Om_2\setminus K)$ so that by step $1$,
\begin{align}\label{h5}
\int_{\Om_2\setminus K} V_2 |u_k|^p dx \ra \int_{\Om_2\setminus K} V_2 |u|^p dx.
\end{align}
We deduce from \eqref{h1}, \eqref{h2}, \eqref{h4} and \eqref{h5} that
\[\int_{\Om} V_2 |u_k|^p dx \ra \int_{\Om} V_2 |u|^p dx.\]
Combing the step $1$ and step $2$, we get the required result.\QED
\end{proof}

\begin{Theorem}\label{m1}
The eigenvalue $\la_1$ is attained.
\end{Theorem}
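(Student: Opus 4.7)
The plan is to apply the direct method of the calculus of variations to the minimization problem \eqref{i1}. I take a minimizing sequence $\{u_k\}\subset M$, so $J_\mu(u_k)\to\la_1$. The sharp fractional Hardy inequality \eqref{s1} gives the coercivity estimate
\begin{align*}
J_\mu(u)\ \geq\ \left(1-\frac{\mu}{C_{n,\al,p}}\right)\|u\|_{X_0}^p,
\end{align*}
so, since $0\leq\mu<C_{n,\al,p}$, $\{u_k\}$ is bounded in the reflexive space $X_0$. Passing to a subsequence, I may assume $u_k\rightharpoonup u$ weakly in $X_0$, $u_k\to u$ a.e.\ on $\Om$, and $u_k\to u$ strongly in $L^r(\Om)$ for every $1\leq r<p_\al^*$.

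Next I check that the weak limit survives the constraint. By Lemma \ref{v1}, $\int_\Om V^+|u_k|^p\,dx\to\int_\Om V^+|u|^p\,dx$. Combined with $u_k\in M$, i.e.\ $\int_\Om V^-|u_k|^p\,dx = \int_\Om V^+|u_k|^p\,dx - 1$, this forces $\int_\Om V^-|u_k|^p\,dx\to \int_\Om V^+|u|^p\,dx - 1$; Fatou's lemma then yields $\int_\Om V|u|^p\,dx\geq 1$, and in particular $u\not\equiv 0$. For the weak lower semicontinuity of $J_\mu$ I apply the Brezis--Lieb Lemma \ref{bl1} twice: first to the sequence $(u_k(x)-u_k(y))/|x-y|^{\al+n/p}$ in $L^p(\mb R^{2n})$, and then to $u_k(x)/|x|^\al$ in $L^p(\Om)$ (the second is bounded by the Hardy inequality applied to $\{u_k\}$, and both converge a.e.\ to the corresponding limits). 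Adding the two splittings gives
\begin{align*}
J_\mu(u_k)\ =\ J_\mu(u) + J_\mu(u_k-u) + o(1),
\end{align*}
and one more use of \eqref{s1} yields $J_\mu(u_k-u)\geq 0$, whence $J_\mu(u)\leq \liminf_k J_\mu(u_k) = \la_1$.

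To finish I rescale: if $t^p:=\int_\Om V|u|^p\,dx>1$, then $u/t\in M$ and $J_\mu(u/t)=J_\mu(u)/t^p<\la_1$, contradicting the definition of $\la_1$. Hence $\int_\Om V|u|^p\,dx=1$, so $u\in M$ realizes the infimum. The main obstacle is the weak lower semicontinuity of $J_\mu$: the Hardy correction $-\mu\int|u|^p/|x|^{p\al}\,dx$ is in general only weakly upper semicontinuous on $X_0$, so the standard Fatou-type argument for the pure Gagliardo form does not suffice. It is precisely the Brezis--Lieb decomposition, coupled with the strict bound $\mu<C_{n,\al,p}$ that keeps the residual $J_\mu(u_k-u)$ non-negative, which delivers the semicontinuity needed to close the argument.
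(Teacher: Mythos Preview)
Your argument is correct and follows the direct method of the calculus of variations, which is a genuinely different route from the paper's proof. The paper instead invokes Ekeland's variational principle to replace the minimizing sequence by a Palais--Smale sequence $\{v_k\}$ on $M$, i.e.\ one for which the constrained derivative $J_\mu'(v_k)\to 0$; it then uses the Brezis--Lieb splitting together with the PS information $\langle J_\mu'(v_k), v_k-v\rangle\to 0$ to upgrade weak convergence to \emph{strong} convergence in $X_0$, which simultaneously delivers the Euler--Lagrange equation. Your approach is more elementary: you use Brezis--Lieb only to obtain weak lower semicontinuity of $J_\mu$ (exploiting that the residual $J_\mu(u_k-u)\geq 0$ when $\mu<C_{n,\al,p}$), and you close the constraint not by strong convergence but by the rescaling trick (if $t^p:=\int_\Om V|u|^p>1$, then $u/t\in M$ would beat the infimum since $J_\mu(u)>0$ for $u\not\equiv 0$). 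The trade-off is that you obtain the minimizer but not, without an extra Lagrange-multiplier step, the equation it satisfies; the paper's route yields both at once, plus strong convergence of the (modified) minimizing sequence.

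One minor slip: the assertion ``$u_k\to u$ strongly in $L^r(\Om)$ for every $1\leq r<p_\al^*$'' is false when $\Om$ is unbounded (in particular for $\Om=\mb R^n$, which the paper explicitly allows). Fortunately you never use it: all you actually need is the a.e.\ convergence (which follows from local compactness and a diagonal argument) for Brezis--Lieb, and Lemma~\ref{v1} for the $V^+$-term, both of which you invoke correctly.
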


\begin{proof}
Let $\{u_k\}$ be a sequence in $M:=\{u\in X_0: \int_{\Om} V|u|^p dx=1\}$ such that $J(u_k)\ra \la_1$. Then by fractional Hardy inequality, $\{u_k\}$ is a bounded sequence in $X_0$ and $X_0$ is reflexive. Then there exists a subsequence $\{u_k\}$ of $\{u_k\}$ such that $u_k\rightharpoonup u$ weakly in $X_0$ and $u_k\ra u$ a.e in $\Om$. Since $J(u)\geq 0$ for $0\leq \mu<C_{n,\al,p}$. So it is bounded below on $M$. By Ekeland variational principle, we can find a sequence $\{v_k\}$ such that
\begin{align}
J(v_k)\leq J(u_k),\;\quad \|u_k-v_k\|\leq \frac{1}{k},\label{feq15}\\
J(v_k)\leq J(u)+\frac{1}{k}\|v_k-u\|\;\mbox{for all}\;u\in M. \label{feq020}
\end{align}
\noi Clearly $J(v_k)$ is a bounded, follows from
$\eqref{feq15}$. So we only need to prove that
$\|J^{\prime}(v_k)\|_{*}\ra 0$. Let $k>\frac{1}{\de}$
and take $w\in X_0$ tangent to $M$ at $v_k$ i.e
\[\int_{\Om}V |v_k|^{p-2} v_k w dx =0,\]

\noi and for $t\in \mb R$, define
\[u_t:= \frac{v_k+tw}{(\int_{\Om} V|v_k+tw|^p dx)^{\frac1p}}.\]
Then $u_t\in M$ and replacing $u$ by
$u_t$ in \eqref{feq020}, we get
\begin{align*}
J(v_k)&\leq
J(u_t)+\frac{1}{k}\|u_t-v_k\|.
\end{align*}
Define $r(t): =(\int_{\Om} V |v_k + tw|^p dx)^{\frac1p}$, we have
\begin{align*}
\frac{J(v_k)-J(v_k+tw)}{t}&\leq
\frac{J(u_t)+\frac{1}{k}\|u_t-v_k\|-J(v_k+tw)}{t}\\
&=\frac{1}{k\; t\;
r(t)}\|v_k(1-r(t)+tw)\|+\frac{1}{t}\left(\frac{1}{r(t)^p}-1\right)J(v_k+tw)
\end{align*}
As \[\frac{d}{dt}r(t)^p\ds|_{t=0}= p \int_{\Om}V |v_k|^{p-2} v_k
w=0,\] we obtain
\[\frac{r(t)^p-1}{t}\ra 0\;\mbox{as}\; t\ra 0,\]
and then
\[\frac{1-r(t)}{t}\ra 0\; \mbox{as}\; t\ra 0.\]
So, we have
\[\frac{J(v_k)-J(v_k+tw)}{t}\leq \frac{1}{k}\|w\|\; \mbox{as}\; t\ra 0,\]
that means
\begin{align}\label{feq16}
|\ld{J^{\prime}}(v_k),w\rd|\leq \frac{1}{k}\|w\|.
\end{align}

\noi Since $w$ is arbitrary in $X_0$, we choose $a_k$ such that
$\int_{\Om}V |v_k|^{p-2} v_k(w-a_k v_k) dx =0$. Replacing $w$ by
$w-a_k v_k$ in \eqref{feq16}, we have
\[\left|\ld{J^{\prime}}(v_k),w\rd - a_k \ld{J^{\prime}}(v_k), v_k\rd\right|\leq \frac{1}{k}\|w-a_k v_k\|,\]
since $\|a_k v_k\|\leq C\|w\|$, we get
\[\left|\ld{J^{\prime}}(v_k),w \rd - t_k \int_{\Om}|v_k|^{p-2}v_k w dx \right|\leq \frac{C}{k}\|w\|\]
where $t_k=\ld{J^{\prime}}(v_k), v_k \rd$. Hence, on $M$,
\begin{align}\label{k1}
\|J^{\prime}(v_k)\|_*\ra 0\;\mbox{as}\; k\ra\infty.
\end{align}
Thus we obtain that, $\{v_k\}$ is a Palais-Smale sequence. By the boundedness of the sequence $\{v_k\}$, we have that up to a subsequence $v_k\rightharpoonup v$ weakly in $X_0$. So, by the weak convergence of $v_k$ we have $\ld J^{\prime}(u),(u_k-u)\rd \ra \infty$.
and by \eqref{k1} $\ld J^{\prime}(v_k),v_k-v\rd \ra 0$ as $k\ra \infty$.
{\small\begin{align}\label{feq08}
&\left|\int_{Q}\frac{|v_k(x)-v_k(y)|^{p-2}(v_k(x)-v_k(y))((v_k- v)(x)-(v_k-v)(y))}{|x-y|^{n+p\al}}dxdy-\mu\int_{\Om} \frac{|v_k|^{p-2} v_k (v_k-v)}{|x|^{p\al}}dx\right|\notag\\
&\quad\leq O(\e_k)+ s\|v_{k}^{+}\|_{p}^{p-1}
\|v_k-v\|_{p}+|t_k|\|v_k\|_{p}^{p-1}
\|v_k-v\|_{p} \lra 0\;\mbox{as}\; k\ra\infty.
\end{align}}
By Brezis-Lieb Lemma \ref{bl1}, we have
\[\int_{Q}\frac{|(v_k-v)(x)-(v_k-v)(y)|^p}{|x-y|^{n+p\al}} dxdy= \int_{Q}\frac{|v_k(x)-v_k(y)|^p}{|x-y|^{n+p\al}} dxdy -\int_{Q}\frac{|v(x)-v(y)|^p}{|x-y|^{n+p\al}} dxdy+ o(1)\]
\[\left\|\frac{(v_k-v)}{|x|^\al}\right\|_{p}^p= \left\|\frac{v_k}{|x|^\al}\right\|_{p}^p- \left\|\frac{v}{|x|^\al}\right\|_{p}^p+ o(1)\]
Using above relation, we obtain from \eqref{s1} and \eqref{feq08},
{\small\begin{align*}
o(1)&=\int_{Q}\frac{|v_k(x)-v_k(y)|^{p-2}(v_k(x)-v_k(y))((v_k- v)(x)-(v_k-v)(y))}{|x-y|^{n+p\al}}dxdy-\mu\int_{\Om} \frac{|v_k|^{p-2} v_k (v_k-v)}{|x|^{p\al}}\\
&\geq \left(1-\frac{\mu}{C(n,p,\al)}\right)\int_{Q}\frac{|(v_k- v)(x)-(v_k-v)(y)|^p}{|x-y|^{n+p\al}}dxdy
\end{align*}}
Hence $v_k\ra v$ strongly in $X_0$. Thus as $k\ra \infty$ we obtain
\begin{align}
(-\De)^s v  -\mu \frac{|v|^{p-2}v}{|x|^{p\al}} = \la V(x) |v|^{p-2} v \; \text{in}\;
\mc D^{\prime}(\Om).
\end{align}
Observe that
\[\int_{\Om} V^- |v_k|^p dx =\int_{\Om} V^+ |v_k|^p dx -1 \ra \int_{\Om} V^+ |v|^p dx -1\]
as $k\ra\infty$. Now using Fatous Lemma, we can conclude that $v\not\equiv 0$. \QED
\end{proof}



\begin{Theorem}
$\la_1$ is simple in the sense that eigenfunctions associated to it are merely a constant multiple of each other.
\end{Theorem}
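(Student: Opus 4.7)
The plan is to follow the classical Picone-identity route to simplicity, adapted to the fractional Hardy setting via an $\e$-regularization of the denominator, in three steps.

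\textbf{Step 1 (reduction to strictly positive minimizers).} Any minimizer $u$ of \eqref{i1} satisfies $J_\mu(|u|)\leq J_\mu(u)$ by the pointwise bound $||u(x)|-|u(y)||\leq|u(x)-u(y)|$, with strict inequality whenever $u$ changes sign on a set of positive measure (there would be a product set of positive measure in $\Om\times\Om$ with $u(x)u(y)<0$ on which the bound is strict). Since $\int_\Om V|u|^p\,dx=\int_\Om V u^p\,dx=1$, $|u|$ is also a minimizer, so every minimizer has constant sign; I take $u\geq 0$. Any $\la_1$-eigenfunction, tested against itself, satisfies $J_\mu(u)=\la_1\int V|u|^p\,dx$; because $J_\mu>0$ on $X_0\setminus\{0\}$ for $0\leq\mu<C_{n,\al,p}$ by the sharp Hardy inequality \eqref{s1}, it follows that $\int V|u|^p\,dx>0$, and after rescaling $u$ is a minimizer. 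The strong maximum principle for $L_\mu$ then forces $u>0$ a.e.\ in $\Om$. It thus suffices to show that two strictly positive minimizers $u_1,u_2$ of \eqref{i1} are proportional.

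\textbf{Step 2 (Picone inequality with a regularized test function).} Fix $\e>0$ and set $\varphi_\e:=u_1^p/(u_2+\e)^{p-1}$. Since $u_2+\e\geq\e$ and $\varphi_\e$ vanishes in $\R^n\setminus\Om$, a direct estimate (using $u_1,u_2\in X_0$ and the Lipschitz dependence of $\varphi_\e$ on its arguments on the set $\{u_2+\e\geq\e\}$) shows $\varphi_\e\in X_0$. Applying Theorem \ref{dp1} to the pair $(u_1,u_2+\e)$, which are respectively nonnegative and strictly positive on $\R^n$, and integrating the pointwise bound against $|x-y|^{-n-p\al}$ yields
\[
\int_Q\frac{|u_1(x)-u_1(y)|^p}{|x-y|^{n+p\al}}\,dxdy \;\geq\; \int_Q\frac{|u_2(x)-u_2(y)|^{p-2}(u_2(x)-u_2(y))(\varphi_\e(x)-\varphi_\e(y))}{|x-y|^{n+p\al}}\,dxdy.
\]
Testing the weak form of the eigenvalue equation satisfied by $u_2$ against $\varphi_\e$ rewrites the right-hand side as
\[
\mu\int_\Om\frac{u_2^{p-1}u_1^p}{(u_2+\e)^{p-1}\,|x|^{p\al}}\,dx \;+\; \la_1\int_\Om V(x)\,\frac{u_2^{p-1}u_1^p}{(u_2+\e)^{p-1}}\,dx.
\]

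\textbf{Step 3 (limit, equality in Picone, and obstacles).} As $\e\to 0^+$, the ratio $u_2^{p-1}/(u_2+\e)^{p-1}\nearrow 1$ a.e.\ in $\Om$ because $u_2>0$ there, and the integrands are dominated by $u_1^p/|x|^{p\al}$ and by $|V|u_1^p$, which are integrable thanks to \eqref{s1} and Lemma \ref{v1}. Monotone/dominated convergence therefore delivers
\[
\int_Q\frac{|u_1(x)-u_1(y)|^p}{|x-y|^{n+p\al}}\,dxdy \;\geq\; \mu\int_\Om\frac{u_1^p}{|x|^{p\al}}\,dx + \la_1\int_\Om V u_1^p\,dx.
\]
Testing the equation for $u_1$ against $u_1$ itself shows that both sides actually coincide, so equality holds throughout. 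Equality in the integrated form forces equality a.e.\ in the pointwise Picone identity, and the equality clause of Theorem \ref{dp1} then yields $u_1=k u_2$ for some constant $k$, proving simplicity. The main obstacles are (a) verifying $\varphi_\e\in X_0$ and its admissibility as a test function in the nonlocal weak formulation, given the singularity of $1/|x|^{p\al}$ and the possible unboundedness of $\Om$, and (b) justifying the passage $\e\to 0^+$ uniformly across the Hardy term and the weighted term, where the sign-indefiniteness of $V$ requires splitting according to $(A_p)$ and appealing to Lemma \ref{v1}. The strong maximum principle invoked in Step 1 is the other delicate ingredient and must be carried out in the nonlocal setting with the Hardy perturbation.
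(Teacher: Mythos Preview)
Your route is the same as the paper's --- simplicity via the discrete Picone identity (Theorem~\ref{dp1}) applied to a pair of first eigenfunctions, with a regularized denominator and a limiting argument to force equality in Picone. The substantive difference is in how the test function is built, and this is exactly where your proposal has a gap.

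The paper does not test with $u_1^p/(u_2+\e)^{p-1}$ directly. It first replaces the numerator eigenfunction $\phi_1$ by a sequence of nonnegative $\psi_k\in C_c^\infty(\mb R^n)$ vanishing off $\Om$, and uses $w_k=\psi_k^p/(u+\tfrac1k)^{p-1}$. The point is that $\psi_k$ is \emph{bounded}, so one obtains the pointwise increment bound
\[
|w_k(x)-w_k(y)|\leq C\bigl(k,p,\|\psi_k\|_\infty\bigr)\bigl(|\psi_k(x)-\psi_k(y)|+|u(x)-u(y)|\bigr),
\]
from which $w_k\in X_0$ is immediate. Your justification that $\varphi_\e=u_1^p/(u_2+\e)^{p-1}\in X_0$ via ``Lipschitz dependence of $\varphi_\e$ on its arguments on $\{u_2+\e\geq\e\}$'' does not go through as stated: the map $(a,b)\mapsto a^p/b^{p-1}$ restricted to $b\geq\e$ is only \emph{locally} Lipschitz in $a$, with constant growing like $a^{p-1}$. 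Without an a~priori $L^\infty$ bound on $u_1$ --- which the paper never establishes --- the resulting increment estimate involves $u_1(x)^{p-1}|u_1(x)-u_1(y)|$ and does not close in the Gagliardo seminorm. This is precisely the obstacle the paper's $\psi_k$-approximation absorbs; the cost is an extra Fatou step at the end rather than your clean dominated-convergence limit.

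A secondary issue: your Step~1 invokes a strong maximum principle for $L_\mu$ to upgrade $u\geq 0$ to $u>0$ a.e. The paper neither proves nor cites such a principle for the fractional $p$-Laplacian with a Hardy perturbation, and it sidesteps the need for strict positivity by keeping the $+\tfrac1k$ shift in the denominator throughout; only nonnegativity of the eigenfunctions (which your $|u|$ argument correctly supplies, and which the paper tacitly assumes) is ever used.
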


\begin{proof}
Let $\phi_1$ and $u$ are two eigenfunctions corresponding to the eigenvalue $\la_1$. Let $\{\psi_k\}$ be a sequence of functions such that $\psi_k \in C_{c}^{\infty}(\mb R^n)$ such that $\psi_k(x)=0$ in $\mb R^n\setminus\Om$, $\psi_k \geq 0$, $\psi_k \ra \phi_1$ in $X_0$ and convergent a.e. in $\Om$. Then we have
\begin{align}\label{k2}
0=& \int_{Q} \frac{|\phi_1(x)-\phi_1(y)|^{p}}{|x-y|^{n+p\al}} dxdy -\mu \int_{\Om} \frac{|\phi_1|^p}{|x|^{p\al}}dx - \la_1 \int_{\Om}V(x)|\phi_1|^p dx\notag\\
=&\lim_{k\ra\infty} \int_{Q} \frac{|\psi_k(x)-\psi_k(y)|^{p}}{|x-y|^{n+p\al}} dxdy -\mu \int_{\Om} \frac{|\psi_k|^p}{|x|^{p\al}}dx - \la_1 \int_{\Om}V(x)|\psi_k|^p dx.
\end{align}
Let $w_k:= \frac{\psi_{k}^p}{(u+\frac{1}{k})^{p-1}}$. Then we show that $w_k\in X_0$. Now,
{\small\begin{align*}
|w_k(x)-&w_k(y)|= \left|\frac{\psi_{k}^{p}(x) -\psi_{k}^{p}(y)}{(u+\frac{1}{k})^{p-1}(x)}+ \frac{\psi_{k}^{p}(y)((u+\frac{1}{k})^{p-1}(x)-(u+\frac{1}{k})^{p-1}(y))}{(u+\frac{1}{k})^{p-1}(x)(u+\frac{1}{k})^{p-1}(y)}\right|\\
&\leq k^{p-1}|\psi_{k}^{p}(x)-\psi_{k}^{p}(y)|+ \|\psi_{k}\|_{\infty}^{p}\frac{|(u+\frac{1}{k})^{p-1}(x)-(u+\frac{1}{k})^{p-1}(y)|}{(u+\frac{1}{k})^{p-1}(x)(u+\frac{1}{k})^{p-1}(y)}\\
&\leq k^{p-1}p (\psi_{k}^{p-1}(x) + \psi_{k}^{p-1}(y))|\psi_{k}(y)-\psi_{k}(x)|\\
&+\|\psi_{k}\|_{\infty}^{p}(p-1)\frac{|(u+\frac{1}{k})^{p-2}(x)+(u+\frac{1}{k})^{p-2}(y)|}{
(u+\frac{1}{k})^{p-1}(x)(u+\frac{1}{k})^{p-1}(y)}\left|\left(u+\frac{1}{k}\right)(x)-\left(u+\frac{1}{k}\right)(y)\right|\\
&\leq 2pk^{p-1} \|\psi_{k}\|_{\infty}^{p-1}  |\psi_{k}(x)-\psi_{k}(y)|\\
&\;+\|\psi_{k}\|_{\infty}^{p}(p-1)\left(\frac{1}{(u+\frac{1}{k})(x)(u+\frac{1}{k})^{p-1}(y)}+\frac{1}{(u+\frac{1}{k})^{p-1}(x)(u+\frac{1}{k})(y)}\right)|u(x)-u(y)|\\
&\leq C(k,p,\|\psi_{k}\|_{\infty}) (|\psi_{k}(x)-\psi_{k}(y)|+|u(x)-u(y)|)
\end{align*}}
for all $(x,y)\in \mb R^n\times \mb R^n$. Hence, $w_k\in X_0$ for all $k\in \mb N$, as $\psi_k$, $u\in X_0$.
Now, testing the equation satisfied by $u$ with $w_k$, we obtain
\begin{align}\label{k3}
\int_{Q} &\frac{|u(x)-u(y)|^{p-2}(u(x)-u(y))\left(\frac{\psi_{k}^p}{(u+\frac{1}{k})^{p-1}}(x)-\frac{\psi_{k}^p}{(u+\frac{1}{k})^{p-1}}(y)\right)}{|x-y|^{n+p\al}} dxdy\notag\\
&\quad= \int_{\Om} \left(\la_1 V(x)+\frac{\mu}{|x|^{p\al}}\right)\psi_{k}^p \left(\frac{u}{u+\frac{1}{k}}\right)^{p-1}dx
\end{align}
Now by equations \eqref{k2} and \eqref{k3}, we obtain
\begin{align*}
0 &= \lim_{k\ra\infty} \int_{Q} \frac{|\psi_k(x)-\psi_k(y)|^{p}}{|x-y|^{n+p\al}} dxdy \\
&\;\quad-\int_{Q} \frac{|u(x)-u(y)|^{p-2}(u(x)-u(y))\left(\frac{\psi_{k}^p}{(u+\frac{1}{k})^{p-1}}(x)-\frac{\psi_{k}^p}{(u+\frac{1}{k})^{p-1}}(y)\right)}{|x-y|^{n+p\al}} dxdy\\
&= \lim_{k\ra\infty} \int_{Q} L(\psi_k, u) \geq \int_{Q} L(\phi_1, u) \geq 0,
\end{align*}
by Fatous Lemma. Therefore by discrete Picone identity \eqref{dp1}, we have $\phi_1= l u$ a.e for some constant $l$. Hence $\la_1$ is simple.\QED
\end{proof}

\begin{Theorem}\label{pt3}
Eigenfunctions corresponding to other eigenvalues changes sign.
\end{Theorem}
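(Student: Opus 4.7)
My plan is to argue by contradiction, mirroring the strategy of the simplicity proof and using the discrete Picone identity with $\phi_1$ as a comparison. I assume $u$ is an eigenfunction associated to $\la\ne\la_1$ that does not change sign in $\Om$; replacing $u$ by $-u$ if necessary, take $u\ge 0$. Normalising $u$ so that $\int_\Om V|u|^p dx=1$ places $u\in M$ and gives $\la=J_\mu(u)\ge\la_1$, hence in fact $\la>\la_1$. The goal is then to derive the reverse inequality $\la\le\la_1$, producing the contradiction.

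First I would pick, as in the simplicity proof, $\{\psi_k\}\subset C_c^\infty(\mb R^n)$ with $\psi_k=0$ on $\mb R^n\setminus\Om$, $\psi_k\ge 0$ and $\psi_k\to\phi_1$ in $X_0$ and a.e.\ in $\Om$, and set $w_k:=\psi_k^p/(u+1/k)^{p-1}$. The pointwise bound already established in the simplicity proof shows $w_k\in X_0$, so $w_k$ is a legitimate test function for the equation satisfied by $u$. Applying Theorem~2.3 (discrete Picone) to the pair $(\psi_k,u+1/k)$ and using $(u+1/k)(x)-(u+1/k)(y)=u(x)-u(y)$ together with the tested equation yields
\[
\int_Q\frac{|\psi_k(x)-\psi_k(y)|^p}{|x-y|^{n+p\al}}\,dxdy\;\ge\;\la\int_\Om V\psi_k^p\Bigl(\tfrac{u}{u+1/k}\Bigr)^{p-1}dx+\mu\int_\Om\frac{\psi_k^p}{|x|^{p\al}}\Bigl(\tfrac{u}{u+1/k}\Bigr)^{p-1}dx.
\]
Passing to the limit $k\to\infty$ (see below) turns this into
\[
\int_Q\frac{|\phi_1(x)-\phi_1(y)|^p}{|x-y|^{n+p\al}}\,dxdy\;\ge\;\la\int_\Om V\phi_1^p\,dx+\mu\int_\Om\frac{\phi_1^p}{|x|^{p\al}}\,dx.
\]
Subtracting the energy identity $\int_Q|\phi_1(x)-\phi_1(y)|^p/|x-y|^{n+p\al}dxdy-\mu\int_\Om\phi_1^p/|x|^{p\al}dx=\la_1\int_\Om V\phi_1^p dx$ (obtained by testing $\phi_1$'s equation against $\phi_1$ itself) and using $\int_\Om V\phi_1^p dx=1$ collapses everything to $\la_1\ge\la$, contradicting $\la>\la_1$.

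The hard part will be the passage to the limit on the right-hand side, because $V$ is sign-changing and possibly singular while Lemma~2.2 controls only the $V^+$-part. I would split $V=V^+-V^-$: for the $V^+$-piece, dominated convergence applies since $(u/(u+1/k))^{p-1}\in[0,1]$ tends to $1$ a.e.\ in $\Om$ (using $u>0$ a.e.\ by the strong maximum principle for nonnegative nontrivial eigenfunctions of the Hardy-fractional operator) and Lemma~2.2 supplies the $L^1$-convergence of the dominator $V^+\psi_k^p$; for the $V^-$-piece, Fatou's lemma applied to the nonnegative integrand $V^-\psi_k^p(u/(u+1/k))^{p-1}$ gives the required $\limsup$-inequality, with the integrability $\int_\Om V^-\phi_1^p dx<\infty$ following from $1=\int_\Om V\phi_1^p dx=\int_\Om V^+\phi_1^p dx-\int_\Om V^-\phi_1^p dx$ together with $\int_\Om V^+\phi_1^p dx<\infty$. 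The $\mu$-term is handled by dominated convergence, the dominator $\psi_k^p/|x|^{p\al}$ being $L^1$-controlled through the fractional Hardy inequality. Once these limit manipulations are in place, the Picone comparison step produces $\la_1\ge\la$ immediately and completes the argument.
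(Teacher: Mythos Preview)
Your proposal is correct and follows essentially the same approach as the paper: both argue by contradiction, test the equation for $u$ with $w_k=\psi_k^p/(u+\tfrac1k)^{p-1}$, invoke the discrete Picone identity for the pair $(\psi_k,u+\tfrac1k)$, and pass to the limit to obtain $(\la-\la_1)\int_\Om V\phi_1^p\,dx\le 0$. You are in fact more careful than the paper about the limit passage (splitting $V=V^+-V^-$, using generalized dominated convergence via Lemma~\ref{v1} and Fatou, and invoking a strong maximum principle to get $u>0$ a.e.), points which the paper compresses into the single phrase ``taking limit as $k\to\infty$''.
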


\begin{proof}
Let $\phi_1$ and $u$ be the eigenfunctions corresponding to $\la_1$ and $\la$ respectively. Then $\phi_1$ and $u$ satisfies
\begin{align}
-2 \int_{\mb R^n}\frac{|\phi_1(x)- \phi_1(y)|^{p-2} (\phi_1(y)-\phi_1(x))}{|x-y|^{n+p\al}} dy -\mu \frac{|\phi_1|^{p-2}\phi_1}{|x|^{p\al}} &= \la_1 V(x) \phi_{1}^{p-1} \; \text{in}\;\mc D^{\prime}(\Om)\label{l1} \\
 -2 \int_{\mb R^n}\frac{|u(x)- u(y)|^{p-2} (u(y)-u(x))}{|x-y|^{n+p\al}} dy -\mu \frac{|u|^{p-2}u}{|x|^{p\al}} &= \la V(x)|u|^{p-2} u \; \text{in}\; \mc D^{\prime}(\Om) \label{l2}
\end{align}
 respectively. Suppose $u$ does not changes the sign. Then we may assume $u\geq 0$. Let $\{\psi_k\}$ be a sequence in $C_{c}^{\infty}(\mb R^n)$ such that $\psi_k=0$ in $\mb R^n\setminus \Om$, $\psi_k \ra \phi_1$ as $k\ra \infty$. Now we consider  the test functions $w_1=\phi_1$ , $w_2=\frac{\psi_{k}^p}{(u+\frac{1}{k})^{p-1}}$. Then $w_1$, $w_2\in X_0$. Taking $w_1$ and $w_2$ as test functions in \eqref{l1} and \eqref{l2} respectively, we obtain
 \begin{align}
 \int_{Q} \frac{|\phi_1(x)-\phi_1(y)|^{p}}{|x-y|^{n+p\al}} dxdy &-\mu \int_{\Om} \frac{|\phi_1|^p}{|x|^{p\al}}dx = \la_1 \int_{\Om}V(x)|\phi_1|^p dx\label{l3}\\
\int_{Q} \frac{|u(x)-u(y)|^{p-2}(u(x)-u(y))}{|x-y|^{n+p\al}}&\left(\frac{\psi_{k}^p}{(u+\frac{1}{k})^{p-1}}(x)-\frac{\psi_{k}^p}{(u+\frac{1}{k})^{p-1}}(y)\right) dxdy\notag\\
 -\mu \int_{\Om} \frac{|u|^{p-2}u\frac{\psi_{k}^p}{(u+\frac{1}{k})^{p-1}}}{|x|^{p\al}}dx
& = \la_1 \int_{\Om}V(x)|u|^{p-2}u \frac{\psi_{k}^p}{(u+\frac{1}{k})^{p-1}}dx.\notag
\end{align}
Since $L(\psi_k, u+\frac{1}{k})\geq 0$, we have
\begin{align}\label{l4}
\int_{Q}\frac{|\psi_k(x)-\psi_k(y)|^p}{|x-y|^{n+p\al}} dxdy - \int_{\Om} \left(\la V(x)+ \frac{\mu}{|x|^{p\al}}\right) \psi_{k}^{p} \left(\frac{u}{u+\frac{1}{k}}\right)^{p-1}\geq 0.
\end{align}
Subtracting \eqref{l3} from \eqref{l4} and taking limit as $k\ra \infty$, we obtain
\[(\la-\la_1) \int_{\Om}V(x) \phi_{1}^p \leq 0,\]
which gives a contradiction to fact that $\la>\la_1$.\QED
\end{proof}
\begin{Theorem}
Let $V_1$, $V_2$ be two weights and assume that $V_1\leq V_2$ a.e. and $|\{x\in \Om: V_1(x)<V_2(x)\}|\ne 0$. Then $\la_1(V_2) < \la_1(V_1)$.
\end{Theorem}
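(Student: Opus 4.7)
The plan is to use the first eigenfunction $\phi_1$ associated to $\la_1(V_1)$ as a test competitor in the variational characterization of $\la_1(V_2)$. By the simplicity result and Theorem \ref{pt3}, eigenfunctions of $\la_1(V_1)$ do not change sign, so I may assume $\phi_1\ge 0$ and, after normalization, that $\int_\Om V_1|\phi_1|^p\,dx=1$, i.e.\ $\phi_1\in M$ with $V=V_1$. Then $J_\mu(\phi_1)=\la_1(V_1)$.

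The key step is to show
\[
\int_\Om V_2|\phi_1|^p\,dx \;>\; \int_\Om V_1|\phi_1|^p\,dx \;=\; 1.
\]
The inequality $\ge 1$ is immediate from $V_1\le V_2$ a.e. Strictness requires that $\phi_1$ not vanish on (a positive measure subset of) $\{x\in\Om:V_1(x)<V_2(x)\}$. This is where I expect the main difficulty: I need a strong minimum principle for the nonlocal Hardy operator $L_\mu$ guaranteeing $\phi_1>0$ a.e.\ in $\Om$ whenever $\phi_1\ge 0$ is a nontrivial weak solution of $L_\mu\phi_1=\la_1(V_1)V_1|\phi_1|^{p-2}\phi_1$. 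Since the operator $L_\mu$ is positive on $X_0$ and has the integro--differential structure of $(-\De)^\al_p$ perturbed by a positive potential, any nonnegative nontrivial supersolution is strictly positive a.e. in $\Om$; this is the standard nonlocal strong maximum principle (as invoked, e.g., in \cite{fp,ll,as1}) and once available it directly yields $|\phi_1|^p>0$ a.e.\ on $\{V_1<V_2\}$, hence the strict inequality above.

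Granted the strict inequality, set
\[
v:=\frac{\phi_1}{\bigl(\int_\Om V_2|\phi_1|^p\,dx\bigr)^{1/p}}.
\]
Then $v\in X_0$ and $\int_\Om V_2|v|^p\,dx=1$, so $v$ is admissible in the variational characterization \eqref{i1} of $\la_1(V_2)$. Using that $J_\mu$ is $p$-homogeneous,
\[
\la_1(V_2)\;\le\;J_\mu(v)\;=\;\frac{J_\mu(\phi_1)}{\int_\Om V_2|\phi_1|^p\,dx}\;=\;\frac{\la_1(V_1)}{\int_\Om V_2|\phi_1|^p\,dx}\;<\;\la_1(V_1),
\]
which is the desired strict inequality. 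The whole argument is compact once the nonvanishing of $\phi_1$ is in hand; the monotonicity $V_1\le V_2$ does the rest through the $p$-homogeneous scaling of $J_\mu$, and the hypothesis $|\{V_1<V_2\}|\ne 0$ is used precisely at the step where strictness is promoted from the a.e.\ inequality.
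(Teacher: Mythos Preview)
Your proposal is correct and follows essentially the same approach as the paper: use the first eigenfunction for $V_1$ as a test competitor in the variational characterization of $\la_1(V_2)$ via the $p$-homogeneity of $J_\mu$. The paper argues the strictness by contradiction (if $\la_1(V_2)=\la_1(V_1)$ then $\int_\Om V_1|u|^p=\int_\Om V_2|u|^p$, forcing $V_1\equiv V_2$), while you argue it directly; but both routes hinge on the same point you correctly isolate and the paper leaves implicit, namely that the eigenfunction is strictly positive a.e.\ in $\Om$.
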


\begin{proof}
Let $u$ be an eigenfunction associated to $\la_1(V_1)$. Since
\[0<\la_1(V_1)^{-1}\left(\int_{Q} \frac{|u(x)-u(y)|^p}{|x-y|^{n+p\al}} dxdy- \mu \int_{\Om} \frac{|u|^{p}}{|x|^{p\al}}dx\right) =\int_{\Om} V_1|u|^p dx \leq \int_{\Om} V_2 |u|^p dx,\]
we use $\frac{u}{(\int_{\Om} V_2 |u|^p dx)^{\frac1p}}$ as an admissible function in infimum of \eqref{i1} for $\la_1(V_2)$. We have
\[\la_1(V_2) \leq \frac{\int_{Q} \frac{|u(x)-u(y)|^p}{|x-y|^{n+p\al}} dxdy- \mu \int_{\Om} \frac{|u|^{p}}{|x|^{p\al}}dx}{\int_{\Om} V_2|u|^pdx}\leq \frac{\int_{Q} \frac{|u(x)-u(y)|^p}{|x-y|^{n+p\al}} dxdy- \mu \int_{\Om} \frac{|u|^{p}}{|x|^{p\al}}}{\int_{\Om} V_1|u|^pdx} =\la_1(V_1).\]
Thus $\la_1(V_2)\leq \la_1(V_1)$. Suppose the equality holds if and only if $\int_{\Om} V_1 |u|^p dx=\int_{\Om} V_2 |u|^p dx$. This last identity implies that $V_1\equiv V_2$, which contradicts our hypothesis. Hence $\la_1(V_2) < \la_1(V_1)$.\QED
\end{proof}

\begin{Theorem}
Let $\Om_1$ be a proper open subset of a domain $\Om_2\subset \mb R^n$. Then $\la_1(\Om_2)<\la_1(\Om_1)$.
\end{Theorem}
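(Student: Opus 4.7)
The plan is to combine the natural inclusion $X_0(\Om_1) \hookrightarrow X_0(\Om_2)$ with the nonlocal strong maximum principle for $(-\De)_p^\al$.

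First, since $\Om_1 \subset \Om_2$, any $u \in X_0(\Om_1)$ vanishes on $\mb R^n \setminus \Om_1 \supset \mb R^n \setminus \Om_2$, so its zero extension $\tilde u$ lies in $X_0(\Om_2)$ with $\|\tilde u\|_{X_0(\Om_2)} = \|u\|_{X_0(\Om_1)}$ and $\int_{\Om_2} V|\tilde u|^p\,dx = \int_{\Om_1} V|u|^p\,dx$. Plugging this inclusion into the variational characterization \eqref{i1} yields $\la_1(\Om_2) \le \la_1(\Om_1)$ immediately.

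For strict inequality I argue by contradiction: suppose $\la_1(\Om_2) = \la_1(\Om_1)$. Let $\phi_1 \ge 0$ be a nontrivial first eigenfunction on $\Om_1$ (available because first eigenfunctions have definite sign, as used in Theorem \ref{pt3}). Its zero extension $\tilde\phi_1$ lies in $X_0(\Om_2)$, belongs to the constraint set $M$ on $\Om_2$, and satisfies $J_\mu(\tilde\phi_1) = J_\mu(\phi_1) = \la_1(\Om_1) = \la_1(\Om_2)$. Hence $\tilde\phi_1$ attains the infimum defining $\la_1(\Om_2)$ and, by the Lagrange-multiplier step in the proof of Theorem \ref{m1}, it is an eigenfunction on $\Om_2$ associated with $\la_1(\Om_2)$.

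Since $\Om_1$ is a proper open subset of $\Om_2$, the set $U := \Om_2 \setminus \ov{\Om_1}$ is nonempty and open; pick $x_0 \in U$. Because $\tilde\phi_1 \equiv 0$ on $U$, the Hardy term $\mu |\tilde\phi_1|^{p-2}\tilde\phi_1 / |x|^{p\al}$ and the weighted term $\la_1 V |\tilde\phi_1|^{p-2}\tilde\phi_1$ both vanish on $U$, so the equation forces $(-\De)_p^\al \tilde\phi_1 = 0$ on $U$. On the other hand, a direct computation at $x_0$ using $\tilde\phi_1(x_0)=0$ and $\tilde\phi_1\ge 0$ gives
\[
(-\De)_p^\al \tilde\phi_1(x_0) = 2\int_{\mb R^n} \frac{\tilde\phi_1(y)^{p-1}}{|x_0-y|^{n+p\al}}\,dy = 2\int_{\Om_1} \frac{\phi_1(y)^{p-1}}{|x_0-y|^{n+p\al}}\,dy > 0,
\]
since $\phi_1 \not\equiv 0$ on $\Om_1$, yielding the desired contradiction and hence $\la_1(\Om_2) < \la_1(\Om_1)$.

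The main obstacle I anticipate is that strictly speaking the eigenvalue equation holds only in $\mc D'(\Om_2)$, so the ``pointwise'' computation above has to be reinterpreted weakly. The cleanest rigorous version is to test the equation on $\Om_2$ against an arbitrary nonnegative $\eta \in C_c^\infty(U)$: the Hardy term and the weighted right-hand side contribute zero, while the nonlocal bilinear pairing reduces to an integral over $U \times \Om_1$ whose integrand has a definite sign, producing the same contradiction rigorously.
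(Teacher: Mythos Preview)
Your proof is correct and follows the same strategy as the paper: extend a first eigenfunction on $\Om_1$ by zero to get a competitor for $\la_1(\Om_2)$, and rule out equality by observing that the extension would then be a first eigenfunction on $\Om_2$ vanishing on part of $\Om_2$. The only difference is in the last step: the paper simply asserts ``$|\{\tilde u=0\}|>0$ is a contradiction'' without explanation, whereas you supply the missing justification by exploiting the nonlocality of $(-\De)_p^\al$ --- testing against $\eta\in C_c^\infty(U)$ reduces the bilinear form to a strictly signed integral over $U\times\Om_1$ --- which is precisely the rigorous argument the paper omits.
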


\begin{proof}
Let $u\in X_0(\Om_1)$ be an eigenfunction associated to $\la_1(\Om_1)$ and put $\tilde{u}$ the function  obtained by extending $u$ by $0$ in $\Om_2\setminus\Om_1$. Then $\tilde{u}\in X_0(\Om_2)$ and $\int_{\Om_2} V |\tilde{u}|^p dx =\int_{\Om_1} V |\tilde{u}|^p dx >0$. Using $\frac{\tilde{u}}{(\int_{\Om} V_2|\tilde{u}|^p)^{\frac1p}}$ as an admissible function for $\la_1(\Om_2)$, we obtain,
{\small\[\la_1(\Om_2)\leq \frac{\int_{Q|_{\Om_2}}\frac{|\tilde{u}(x)-\tilde{u}(y)|^p}{|x-y|^{n+p\al}} dxdy - \mu \int_{\Om} \frac{|\tilde{u}|^{p}}{|x|^{p\al}}dx}{\int_{\Om} V_2|\tilde{u}|^p dx}\leq \frac{\int_{Q|_{\Om_1}}\frac{|u(x)- u(y)|^p}{|x-y|^{n+p\al}} dxdy- \mu \int_{\Om} \frac{|u|^{p}}{|x|^{p\al}}dx}{\int_{\Om} V_1|u|^p dx} = \la_1(\Om_1),\]}
where $Q|_{\Om_i}=\mb R^{2n}\setminus \mc C\Om_i\times \mc C \Om_i$ and $\mc C \Om_i= \mb R^n\setminus \Om_i$. The equality hold only if $\tilde{u}$ is an eigenfunction associated to $\la_1(\Om_2)$ but this is impossible because $|\tilde{u}=0|>0$ is a contradiction.\QED
\end{proof}

\section{Examples and counterexamples:}
If $\Om =\mb R^n$, then we have the following results:
\begin{Theorem}
If $|x|^{p\al} V(x)\ra \infty$ as $|x|\ra\infty$ or $|x-z|^{p\al} V(x)\ra \infty$ as $x\ra z$ for some $z$, then the infimum in \eqref{p2} is $0$ (and is not achieved).
\end{Theorem}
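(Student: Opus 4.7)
The plan is to exploit the scale invariance of both the Gagliardo seminorm $\|\cdot\|_{X_0}$ and the Hardy integral $\int|\cdot|^p/|x|^{p\al}\,dx$ under the canonical rescaling $u\mapsto \la^{(n-p\al)/p}u(\la\,\cdot)$, while forcing the weighted integral $\int V|u|^p\,dx$ to blow up along the singular direction of $V$. Fix a nontrivial nonnegative $\phi\in C_c^{\infty}(\mb R^n\setminus\{0\})$ supported in an annulus, and for each $\la>0$ put $\phi_\la(x):=\la^{(n-p\al)/p}\phi(\la x)$. A direct change of variables gives
\[\int_{\mb R^{2n}}\frac{|\phi_\la(x)-\phi_\la(y)|^p}{|x-y|^{n+p\al}}\,dx\,dy = \|\phi\|_{X_0}^p, \qquad \int_{\mb R^n}\frac{|\phi_\la|^p}{|x|^{p\al}}\,dx = \int_{\mb R^n}\frac{|\phi|^p}{|x|^{p\al}}\,dx,\]
so that $J_\mu(\phi_\la)=J_\mu(\phi)$ is a fixed finite constant independent of $\la$.

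For the first hypothesis $|x|^{p\al}V(x)\to\infty$ as $|x|\to\infty$, I will let $\la\to 0^+$, so that the mass of $\phi_\la$ spreads to infinity. Writing $V(x)=g(x)/|x|^{p\al}$ with $g(x)\to\infty$ at infinity, the same change of variables yields
\[\int_{\mb R^n} V\,|\phi_\la|^p\,dx = \int_{\mb R^n}\frac{g(y/\la)}{|y|^{p\al}}\,|\phi(y)|^p\,dy,\]
and since $g(y/\la)\to\infty$ on the support of $\phi$ (where $g$ is eventually positive), Fatou's lemma forces this integral to diverge. For the second hypothesis $|x-z|^{p\al}V(x)\to\infty$ as $x\to z$, I will translate and concentrate: put $u_\la(x):=\la^{(n-p\al)/p}\phi(\la(x-z))$ and let $\la\to\infty$. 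The Gagliardo seminorm is unchanged by translation, the Hardy integral stays invariant when $z=0$ and behaves like $\la^{-p\al}|z|^{-p\al}\int|\phi|^p\,dx\to 0$ when $z\neq 0$, and the analogous rescaling together with Fatou applied to $h(x):=|x-z|^{p\al}V(x)$ forces $\int V|u_\la|^p\,dx\to\infty$. In either regime, $v_\la:=u_\la/(\int V|u_\la|^p\,dx)^{1/p}$ lies in $M$ for $\la$ sufficiently extreme, and
\[\la_1 \leq J_\mu(v_\la) = \frac{J_\mu(u_\la)}{\int V|u_\la|^p\,dx}\longrightarrow 0,\]
which, together with $\la_1\geq 0$ coming from the sharp Hardy inequality, gives $\la_1=0$.

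For non-attainment, suppose some $v\in M$ realized $\la_1=0$. The sharp fractional Hardy inequality of Theorem 1.1 would give
\[J_\mu(v)\geq \left(1-\frac{\mu}{C_{n,\al,p}}\right)\|v\|_{X_0}^p,\]
which is strictly positive unless $\|v\|_{X_0}=0$, forcing $v\equiv 0$ and contradicting $\int V|v|^p\,dx=1$. The main technical point that needs care is the Fatou step when $V$ is sign-changing: one has to observe that the assumption $h\to+\infty$ at the singular site makes the integrand eventually nonnegative on the rescaled support, so that Fatou applies once $\la$ is pushed sufficiently far in the appropriate direction.
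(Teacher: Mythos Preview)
Your proposal is correct and follows essentially the same scaling argument as the paper: the paper uses the unnormalised rescaling $u_r(x)=u(x/r)$ and observes that the common factor $r^{n-p\al}$ cancels in the Rayleigh quotient, leaving the denominator $\int |rx|^{p\al}V(rx)\,|u|^p/|x|^{p\al}\,dx$, which tends to infinity; your choice of the $L^p$-critical scaling $\phi_\la(x)=\la^{(n-p\al)/p}\phi(\la x)$ is simply the normalised version of the same move. Your write-up is in fact more complete than the paper's, since you treat the case $z\neq 0$ explicitly (the paper only does $z=0$ and says ``the other cases being similar'') and you supply the non-attainment argument via the strict Hardy inequality, which the paper states parenthetically without proof.
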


\begin{proof}
We only consider the case of  $|x|^{p\al} V(x)\ra \infty$ as $x\ra 0$, the other cases being similar. Let $u\in C_{c}^{\infty}(\mb R^n)$ and set $u_{r}(x)= u(x/r)$. Then
\begin{align}
\frac{\int_{\mb R^{2n}} \frac{|u_{r}(x)-u_{r}(y)|^p}{ |x-y|^{n+p\al}}dxdy - \int_{\mb R^n} \frac{|u_{r}(x)|^{p}}{|x|^{p\al}}dx}{\int_{\mb R^n} V(x) |u_{r}(x)|^p dx}=\frac{\int_{\mb R^{2n}} \frac{|u(x)-u(y)|^p}{|x-y|^{n+p\al}}dxdy- \int_{\mb R^n} \frac{|u(x)|^{p}}{|x|^{p\al}}dx}{\int_{\mb R^n}|rx|^{p\al} V(rx) \frac{|u(x)|^p}{|x|^{p\al}} dx}.
\end{align}
Since $u$ has compact support and $\frac{|u|^p}{|x|^{p\al}}\in L^{1}(\mb R^n)$, it follows easily that the right hand side above tends to $0$ as $r\ra 0$.
In case of $|x|\ra\infty$, the function $u\in C_{c}^{\infty}(\mb R^n)$ should be chosen so that $0\not\in$ supp $u$.\QED
\end{proof}
\noi Now we give some example of  potential $V$:

\begin{Example}
 Define
\[W_1(x)= \frac{1}{(1+|x|^{2\al})[\log(2+|x|^{2\al})]^{\frac{2\al}{n}}}\]
  and
\[W_2(x)= \frac{1}{|x|^{2\al}(1+|x|^{2\al})\left[\log\left(2+\frac{1}{|x|^{2\al}}\right)\right]^{\frac{2\al}{n}}}.\]
Then by Theorem \eqref{m2} for $p=2$ and Theorem \eqref{m3} for $p\ne2$, \eqref{p2} has infinitely many positive eigenvalue if $V= W_1$ or $W_2$ although $W_1$, $W_2$ are not in $L^{\frac{n}{2\al}}(\mb R^n)$. But we observe that $W_1\in L^{q}(\mb R^n)$ for all $q>\frac{n}{2\al}$, $W_2\in L^{q}(\mb R^n)$ for all $q\in(\frac{n}{4\al}, \frac{n}{2\al})$.

\noi If $W_3(x)=\frac{1}{1+|x|^{2\al}}$ and $W_4(x)= \frac{1}{|x|^{2\al}(1+|x|^{2\al})}$. Then $W_3$ and $W_4$ are not in $L^{\frac{n}{2\al}}(\mb R^n)$ and are in the same $L^q$-spaces as $W_1$ and $W_2$ respectively. Moreover, If we take $W_3(x),$ $W_4(x)= V_2$ as in assumption of $(A_p)$, we see that both the condition stated below, for $W_3$ and first condition for $W_3$,
\[\lim_{x\ra y, x\in\Om} |x-y|^{p\al} V_2(x)=0\; \mbox{for all}\; y\in \overline{\Om}, \;\mbox{and}\; \lim_{|x|\ra\infty, x\in\Om} |x|^{p\al} V_{2}(x)=0\]
 are not satisfied. So, we can't say anything about the existence of eigenvalue.\QED
\end{Example}
\section{On the Higher eigenvalue}
In this section, we show the existence of a sequence of eigenvalues $\{\la_k\}$ of \eqref{p2} such that $\la_k\ra\infty$ as $k\ra\infty$ using the Ljusternik-Schnirelman critical point theory on $C^{1}$ manifold.

 In order to obtain the higher eigenvalue of \eqref{p2} in the linear case $p=2$, we solve the following problem
{\small\begin{align*}
(P_k)&\mbox{ minimize} \int_{Q}\frac{|u(x)-u(y)|^2}{|x-y|^{n+2\al}} dxdy - \mu \int_{\Om}\frac{|u|^2}{|x|^{2\al}}dx, u\in X_0\\
\int_{Q}& \frac{(u(x)-u(y)(\phi_1(x)-\phi_1(y)))}{|x-y|^{n+p\al}} dxdy= \cdots=\int_{Q} \frac{(u(x)-u(y)(\phi_{k-1}(x)-\phi_{k-1}(y)))}{|x-y|^{n+p\al}} dxdy= 0,\\
& \int_{\Om} V u^2 dx=1, \;\mbox{where}\; \phi_j\; \mbox{is a solution of}\; (P_{j}), 1\leq j\leq k-1.
 \end{align*}}
{\small\begin{Theorem}\label{m2}
Under assumption $(A_2)$, for every $n>2\al$, problem $(P_{k})$ has a solution $\phi_k$. Moreover, $\phi_k$ is an eigenfunction of $(P_{k})$ corresponding to eigenvalue $\la_k: = \int_{Q}\frac{|\phi_{k}(x)-\phi_{k}(y)|^2}{|x-y|^{n+2\al}} dxdy - \mu \int_{\Om}\frac{\phi_{k}^2}{|x|^{2\al}}dx$  and $\la_k\ra\infty$ as $k\ra\infty$.
\end{Theorem}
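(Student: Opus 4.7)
The plan is to argue by induction on $k$: having constructed $\phi_1,\ldots,\phi_{k-1}$ as eigenfunctions with eigenvalues $\la_1\le\cdots\le\la_{k-1}$ that are pairwise $X_0$-orthogonal, I would produce $\phi_k$ by direct minimization, identify it as an eigenfunction via Lagrange multipliers, and then deduce $\la_k\to\infty$ from Hilbert-space orthogonality. For existence, let $\{u_m\}$ be a minimizing sequence for $(P_k)$. The sharp fractional Hardy inequality gives $J_\mu(u_m)\ge (1-\mu/C_{n,\al,2})\|u_m\|_{X_0}^2$, so $\{u_m\}$ is bounded in the reflexive Hilbert space $X_0$; pass to $u_m\rightharpoonup\phi_k$. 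The linear orthogonality constraints are weakly closed, Lemma~\ref{v1} applied to $V^+$ and $V^-$ yields $\int_\Om V u_m^2\,dx\to\int_\Om V\phi_k^2\,dx=1$, and $J_\mu$ is weakly lower semicontinuous because its square root is an equivalent Hilbert norm on $X_0$. Hence $\phi_k$ attains the infimum $\la_k$ and is non-trivial.

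To identify $\phi_k$ as an eigenfunction with eigenvalue $\la_k$, the Lagrange multiplier rule on the $C^1$ constraint manifold (with $C^1$ regularity of $u\mapsto\int_\Om V u^2\,dx$ following from Lemma~\ref{v1}) produces $\la_k$ and scalars $c_1,\ldots,c_{k-1}$ with
\[\ld\phi_k,v\rd_\mu=\la_k\int_\Om V\phi_k v\,dx+\sum_{j=1}^{k-1}c_j\ld\phi_j,v\rd_{X_0}\qquad\forall\,v\in X_0,\]
where $\ld u,v\rd_\mu:=\int_Q \frac{(u(x)-u(y))(v(x)-v(y))}{|x-y|^{n+2\al}}\,dxdy-\mu\int_\Om \frac{uv}{|x|^{2\al}}\,dx$. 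Testing with $v=\phi_i$ for $i<k$, the constraint $\ld\phi_k,\phi_i\rd_{X_0}=0$ together with the inductive eigenvalue equation $L_\mu\phi_i=\la_i V\phi_i$ evaluated against $\phi_k$, and pairwise $X_0$-orthogonality of the $\phi_j$'s collapsing the Lagrange sum to $c_i\|\phi_i\|_{X_0}^2$, combine to $(\la_i-\la_k)\int_\Om V\phi_i\phi_k\,dx=c_i\|\phi_i\|_{X_0}^2$. The monotonicity $\la_i<\la_k$ (and a Gram--Schmidt adjustment within any multiplicity block) then forces the $V$-biorthogonality $\int_\Om V\phi_i\phi_k\,dx=0$ and hence $c_i=0$.

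For the divergence $\la_k\to\infty$: if a subsequence satisfied $\la_k\le C$, the Hardy bound $\|\phi_k\|_{X_0}^2\le (1-\mu/C_{n,\al,2})^{-1}C$ would hold, and Bessel's inequality applied to the pairwise $X_0$-orthogonal sequence $\{\phi_k\}$ in the Hilbert space $X_0$ would force $\phi_k\rightharpoonup 0$ weakly; Lemma~\ref{v1} would then give $1=\int_\Om V\phi_k^2\,dx\to 0$, a contradiction.

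The main obstacle is the vanishing of the Lagrange multipliers in the middle step: since the orthogonality constraints live in the bare $X_0$ inner product while the Euler--Lagrange form uses the $\mu$-modified one, the Hardy term entangles the two and the biorthogonality computation above has to be handled carefully, especially across multiplicity blocks. A cleaner backup, if the direct calculation proves too delicate, is to endow $X_0$ with the equivalent inner product $\ld\cdot,\cdot\rd_\mu$ and recognize the whole construction as the spectral decomposition of the compact self-adjoint operator $u\mapsto L_\mu^{-1}(Vu)$ (compactness coming from Lemma~\ref{v1}); its eigenvectors are automatically biorthogonal in both inner products, and its eigenvalues accumulate only at $0$, which is precisely $\la_k\to\infty$.
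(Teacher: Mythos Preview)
Your approach and the paper's are essentially the same skeleton: minimize inductively under the orthogonality constraints, identify the minimizer as an eigenfunction, then use the $X_0$-orthogonality of the $\phi_k$'s together with Lemma~\ref{v1} to force $\la_k\to\infty$. The paper's divergence argument normalizes $f_k=\phi_k/\sqrt{\la_k}$ and shows $\la_k^{-1}=\int_\Om Vf_k^2\,dx\to 0$ directly, while you argue the same point by contradiction; these are equivalent. For existence the paper simply refers back to Theorem~\ref{m1} (Ekeland plus a Palais--Smale analysis), whereas your direct weak-lower-semicontinuity argument is the cleaner route in the Hilbert case $p=2$.

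One genuine slip: you invoke Lemma~\ref{v1} ``applied to $V^+$ and $V^-$'' to pass the constraint $\int_\Om V u_m^2\,dx=1$ to the limit, but assumption $(A_2)$ only structures $V^+$; nothing is assumed on $V^-$ beyond $V\in L^1_{\mathrm{loc}}$, so Lemma~\ref{v1} says nothing about $\int_\Om V^- u_m^2\,dx$. The fix is easy: Fatou gives $\int_\Om V^-\phi_k^2\,dx\le\liminf\int_\Om V^- u_m^2\,dx$, hence $\int_\Om V\phi_k^2\,dx\ge 1$, and a rescaling then produces an admissible minimizer. The same remark applies in your divergence step: from $\phi_k\rightharpoonup 0$ you only get $\int_\Om V^+\phi_k^2\,dx\to 0$, but since $\int_\Om V\phi_k^2\,dx\le\int_\Om V^+\phi_k^2\,dx$ this already contradicts $\int_\Om V\phi_k^2\,dx=1$.

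Your caution about the Lagrange multipliers is well placed: the paper handles this by citing \cite{var}, which treats $\mu=0$ where the $X_0$-inner product and $\ld\cdot,\cdot\rd_\mu$ coincide, so the mismatch you flag is real for $\mu>0$. Your compact-self-adjoint backup (equip $X_0$ with $\ld\cdot,\cdot\rd_\mu$ and study $L_\mu^{-1}(V\,\cdot\,)$) is the clean resolution and is in fact more rigorous here than the paper's citation.
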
}

\begin{proof}
The existence of $\phi_k$ is proved as in Theorem \ref{m1}. An elementary argument in \cite{var} shows that $\phi_k$ is an eigenfunction of $(P_{k})$ corresponding to eigenvalue
\[\la_{k}: = \int_{Q}\frac{|\phi_{k}(x)-\phi_{k}(y)|^2}{|x-y|^{n+2\al}} dxdy - \mu \int_{\Om}\frac{\phi_{k}^2}{|x|^{2\al}}dx.\]
The sequence $f_k:= \phi_k/ {\sqrt{\la_{k}}}$ is orthonormal in $X_0$ so that $f_k\rightharpoonup 0$. Since
\[ \la_{k}^{-1}= \la_{k}^{-1}\left(\int_{Q}\frac{|f_{k}(x)-f_{k}(y)|^2}{|x-y|^{n+2\al}} dxdy - \mu \int_{\Om}\frac{f_{k}^2}{|x|^{2\al}}dx\right) = \int_{\Om} V f_{k}^2 dx,\]
Lemma \ref{v1} implies that
\begin{align*}
0\leq \lim_{k\ra\infty} \la_{k}^{-1}= \lim_{k\ra\infty} \int_{\Om} V f_{k}^2 dx\leq 0,
\end{align*}
which completes the proof of Lemma.\QED
\end{proof}
\noi Since the equation \eqref{p2} is nonlinear (unless $p=2$), it is not possible to obtain higher eigenvalues by the method of above Theorem \ref{m2}. For this, we will use the Ljusternik-Schnirelman critical point theory on $C^{1}$ manifold proved by \cite{st}. Let
\[J_{\mu}(u):=  \int_{Q}\frac{|u(x)-u(y)|^p}{|x-y|^{n+p\al}} dxdy -\mu \int_{\Om}\frac{|u|^p}{|x|^{p\al}} dx \;\mbox{and}\; \Psi(u):= \int_{\Om} V |u|^p
dx. \]
Since the set $\{u\in X_0 : \int_{\Om} V|u|^p dx=1 \}$ is a not a manifold in $X_0$ unless further assumptions are made on $V^-$. So we introduce a new space $X:= \{u\in X_0 :   \|u\|_{X}<\infty\},$ where
\[\|u\|^{p}_{X} := \int_{Q}\frac{|u(x)-u(y)|^p}{|x-y|^{n+p\al}} dxdy+ \int_{\Om} V^- |u|^p dx. \]
Then $M:=\{u\in X: \int_{\Om} V|u|^p dx=1\}$  is a $C^1-$manifold as a subset of the space $X$. Moreover critical points of $\phi|_{M}$ are eigenfunctions and corresponding critical values are eigenvalues of \eqref{p2}.  On this space, we define the functional
\[J_{\mu}(u)= \frac{ \int_{Q}\frac{|u(x)-u(y)|^p}{|x-y|^{n+p\al}} dxdy -\mu \int_{\Om}\frac{|u|^p}{|x|^{p\al}}dx}{\int_{\Om} V |u|^p
dx}.\]
Let $\tilde{J}_{\mu}$ denote the restriction of $J_{\mu}$ to $M$ and let $\Psi^{\pm}(u):= \int_{\Om} V^{\pm} |u|^p dx$. Then we prove the following Lemma:
\begin{Lemma}
If $V$ satisfies $(A_p)$, then the following holds:
\begin{enumerate}
\item[$(i)$] The Fr\'{e}chet derivatives of $\Psi^{+}$ is completely continuous as a mapping from $X$ to $X^*$.
\item[$(ii)$]  $\Psi^{+}(u)\leq c J_{\mu}(u)$ for some $c>0$ and for all $u\in X$, $0\leq u< C_{n,\al,p}$.
\end{enumerate}
\end{Lemma}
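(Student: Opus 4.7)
\noi \textbf{Proof proposal.} For both parts I would use the $V^+ = V_1+V_2$ decomposition from assumption $(A_p)$, combined with H\"older's inequality, the fractional Sobolev embedding, and the fractional Hardy inequality \eqref{s1}. Throughout I write $p_\al^* = np/(n-p\al)$.

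For part $(i)$, note first that $\Psi^+$ is of class $C^1$ on $X$ with $\ld(\Psi^+)'(u),v\rd = p\int_{\Om} V^+ |u|^{p-2}u\, v\, dx$. Take $u_k \rightharpoonup u$ weakly in $X$; by continuity of $X \hookrightarrow X_0$, also $u_k \rightharpoonup u$ in $X_0$, so $\{u_k\}$ is bounded in $L^{p_\al^*}(\Om)$, converges strongly in $L^q_{\mbox{loc}}$ for every $q<p_\al^*$, and converges pointwise a.e. on $\Om$. Given $\e>0$, split $V_1 = V_1^R + (V_1-V_1^R)$ with $V_1^R$ bounded and compactly supported such that $\|V_1-V_1^R\|_{L^{n/(p\al)}} \le \e$. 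For the $V_1-V_1^R$ piece, H\"older with exponents $n/(p\al)$, $p_\al^*/(p-1)$, $p_\al^*$ and boundedness of $\{u_k\}$ in $L^{p_\al^*}$ bounds the corresponding term in $\|(\Psi^+)'(u_k)-(\Psi^+)'(u)\|_{X^*}$ by $C\e$ uniformly over $v$ with $\|v\|_X\le 1$; for the $V_1^R$ piece, compactness of $X_0 \hookrightarrow L^{p}_{\mbox{loc}}$ together with dominated convergence gives $\||u_k|^{p-2}u_k - |u|^{p-2}u\|_{L^{p/(p-1)}(B_R)}\to 0$. For $V_2$, mirror the decomposition in Lemma \ref{v1}: outside a large ball $B_R$ where $|x|^{p\al}V_2\le\e$, and inside a finite union of balls $B(x_j,r)$ around the singular points where $|x-x_j|^{p\al}V_2\le \e/l$, apply H\"older and the fractional Hardy inequality (centered at the origin, respectively at $x_j$) to the integrals $\int V_2 (|u_k|^{p-2}u_k - |u|^{p-2}u) v\, dx$, obtaining a bound $C\e$; on the remaining compact region $V_2$ is bounded and the $V_1^R$-argument applies. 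Letting $\e\to 0$ gives $(\Psi^+)'(u_k)\ra(\Psi^+)'(u)$ in $X^*$.

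For part $(ii)$, write $V^+ = V_1+V_2$. H\"older with exponents $n/(p\al)$ and $n/(n-p\al)$ together with the fractional Sobolev embedding give
\[\int_{\Om} V_1 |u|^p \, dx \le \|V_1\|_{L^{n/(p\al)}} \|u\|_{L^{p_\al^*}}^p \le C_1 \|u\|_{X_0}^p.\]
For $V_2$, by $(A_p)$ choose $R$ with $|x|^{p\al}V_2(x)\le 1$ for $|x|\ge R$ and cover $\ov{\Om\cap B_R}$ by finitely many balls $B(x_j,r_j)$ with $|x-x_j|^{p\al}V_2\le 1$; on each such piece the fractional Hardy inequality \eqref{s1} (applied about the origin, respectively about $x_j$, to the zero extension of $u$ to $\mb R^n$) bounds $\int V_2|u|^p$ by a constant multiple of $\|u\|_{X_0}^p$, while on the complement $V_2$ is bounded and the $V_1$-argument applies. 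Summing, $\Psi^+(u) \le C\|u\|_{X_0}^p$. Finally, because $0\le \mu < C_{n,\al,p}$, the fractional Hardy inequality \eqref{s1} yields $J_\mu(u)\ge (1-\mu/C_{n,\al,p})\|u\|_{X_0}^p$, so $\Psi^+(u) \le c J_\mu(u)$ with $c = C(1-\mu/C_{n,\al,p})^{-1}$.

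The main obstacle I expect is in part $(i)$: securing uniformity of the estimates in $v\in X$ with $\|v\|_X\le 1$ during the $V_2$-decomposition, since the fractional Hardy inequality must be applied about each singular point $x_j$ rather than only the origin. This is handled by extending $v$ by zero to $\mb R^n$ and translating \eqref{s1}, which is legitimate because the $X_0$-norm is preserved by the zero extension.
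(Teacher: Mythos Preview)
Your proposal is correct and takes essentially the same approach as the paper: both decompose $V^+=V_1+V_2$, treat $V_1$ via H\"older and the fractional Sobolev embedding, and treat $V_2$ via the $\Om_1$/$K$/$\Om_2\setminus K$ splitting from Lemma~\ref{v1} together with translated fractional Hardy inequalities, then pass to $J_\mu$ in part~(ii) through the bound $J_\mu(u)\ge(1-\mu/C_{n,\al,p})\|u\|_{X_0}^p$. The only minor variation is in the $V_1$-part of~(i): the paper observes directly that $||u_k|^{p-2}u_k-|u|^{p-2}u|^{p/(p-1)}\rightharpoonup 0$ weakly in $L^{n/(n-p\al)}(\Om)$ and pairs this against $V_1\in L^{n/(p\al)}$, whereas you truncate $V_1$ and use local compact embedding---both are standard and work equally well.
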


\begin{proof}
\begin{enumerate}
\item[$(i)$] Let $u_k\rightharpoonup u$. By the H\"{o}lder and the fractional Sobolev inequalities,
\begin{align*}
\int_{\Om} V_1 (|u_k|^{p-2} u_k -|u|^{p-2} u)v dx &\leq \left(\int_{\Om} V_1 ||u_k|^{p-2}u_k  -|u|^{p-2} u|^{\frac{p}{p-1}}dx\right)^{\frac{p-1}{p}} \left(\int_{\Om} V_1 |v|^{p}dx\right)^{\frac{1}{p}}\\
&\leq c_1 \|v\|_{X}\left(\int_{\Om} V_1 ||u_k|^{p-2}u_k  -|u|^{p-2} u|^{\frac{p}{p-1}}dx\right)^{\frac{p-1}{p}}.
\end{align*}
It is easy to see that $||u_k|^{p-2} u_k -|u|^{p-2} u|^{\frac{p}{p-1}}\rightharpoonup 0$ in $L^{\frac{n}{n-p\al}}(\Om)$ (indeed, otherwise there would exist a subsequence converging weakly to some $v\not= 0$ and a.e to $0$, a contradiction). Since $V_1\in L^{n/{p\al}}(\Om)$, the right hand side above tends to $0$ uniformly for $\|v\|_{X}\leq 1$. This shows the complete continuity of the $V_1-$part.

Using the notation of Lemma \ref{v1} and the H\"{o}lder, the fractional Hardy and the fractional Sobolev inequalities, we see that
\[\int_{\Om_1} V_2 (|u_k|^{p-2} u_k -|u|^{p-2} u)v dx \leq c_2 \e \|v\|_X (\|u_k\|^{p-1}_{X}+ \|u_k\|^{p-1}_{X}) \leq c_3 \e \|v\|_X.\]
Similarly
\[\int_{K} V_2 (|u_k|^{p-2} u_k -|u|^{p-2} u)v dx  \leq c_4 \e \|v\|_X,\]
where $c_i:s$ are independent of $\e$. Since $\Om_2\setminus K$ is bounded and $V_2 \in L^{\infty}(\Om_2\setminus K)$. It follows that $|u_k|^{p-2}u_k \rightharpoonup |u|^{p-2}u$ in $L^{\frac{p}{p-1}}(\Om_2\setminus K)$ and
\[\int_{\Om_2\setminus K} V_2 (|u_k|^{p-2} u_k -|u|^{p-2} u)v dx \ra 0.\]
\item[$(ii)$] By the H\"{o}lder and the fractional Sobolev inequalities,
\[\int_{\Om} V_1 |u|^{p} dx \leq c_5 \int_{\Om} \frac{|u(x)-u(y)|^{p}}{|x-y|^{n+p\al}} dxdy \leq C_5 J_{\mu}(u).\]
Fixing some $\e>0$ and using the H\"{o}lder, the fractional Hardy and the Sobolev inequality again, it is easy to see that
\[\int_{\Om_1} V_2 |u|^{p} dx \leq c_6 \int_{\Om} \frac{|u(x)-u(y)|^{p}}{|x-y|^{n+p\al}} dxdy \leq C_6 J_{\mu}(u).\]
and similar inequalities holds for $K$ and $\Om\setminus K$. Hence the conclusion now follows by recalling the definition of $\Psi^{+}$ and $J_{\mu}$. \QED
\end{enumerate}
\end{proof}
Let $t>0$ and let $A_{t}: X\ra X^*$ be the operator given by
{\small\[\langle A_{t}(u), \phi\rangle = \int_{Q}\frac{|u(x)-u(y)|^{p-2}(u(x)-u(y))(\phi(x)-\phi(y))}{|x-y|^{n+p\al}}dx dy-\mu \int_{\Om}\frac{|u|^{p-2}uv}{|x|^{p\al}}  + t\int_{\Om} V^- |u|^{p-2} u\phi .\]}
\begin{Lemma}\label{le1}
If $u_k\rightharpoonup u$ weakly in $X$ and $\langle A_{t}(u_k), u_k-u\rangle \ra 0,$ then $u_k\ra u$ strongly in $X$.
\end{Lemma}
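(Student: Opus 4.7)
The plan is to extend the Brezis-Lieb argument of Theorem \ref{m1} to cover the extra term $t\int_\Om V^-|u|^{p-2}u\phi\,dx$ appearing in $A_t$. From $u_k\rightharpoonup u$ in $X$, the sequence $\{u_k\}$ is bounded in $X_0$ and, via the standard compact embedding of $X_0$ into $L^p_{loc}(\Om)$, I may pass to a subsequence with $u_k\to u$ a.e.\ in $\Om$. I will split
\[
\langle A_t(u_k),u_k-u\rangle = \langle A_t(u_k),u_k\rangle - \langle A_t(u_k),u\rangle
\]
and treat each of the three integral pieces (the fractional Gagliardo nonlinearity, the Hardy term, and the $V^-$ term) separately in both halves.

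For the diagonal part $\langle A_t(u_k),u_k\rangle$, I will apply the Brezis-Lieb Lemma \ref{bl1} in each of the three natural $L^p$ spaces, namely $L^p(Q,|x-y|^{-(n+p\al)}dx\,dy)$, $L^p(\Om,|x|^{-p\al}dx)$, and $L^p(\Om,V^-dx)$, whose applicability rests on a.e.\ convergence of $u_k$ together with the uniform $L^p$ bounds supplied by the boundedness of $\{u_k\}$ in $X$ and the Hardy inequality \eqref{s1}. For the cross part $\langle A_t(u_k),u\rangle$, I will observe that the sequences $|u_k(x)-u_k(y)|^{p-2}(u_k(x)-u_k(y))|x-y|^{-(n+p\al)(p-1)/p}$, $|u_k|^{p-2}u_k|x|^{-(p-1)\al}$, and $(V^-)^{(p-1)/p}|u_k|^{p-2}u_k$ are each bounded in the appropriate $L^{p/(p-1)}$ space and converge a.e.\ to the corresponding expressions for $u$; this yields weak convergence in those $L^{p/(p-1)}$ spaces, and pairing against the fixed $L^p$ elements $(u(x)-u(y))|x-y|^{-(n+p\al)/p}$, $u|x|^{-\al}$, and $(V^-)^{1/p}u$ produces
\[
\langle A_t(u_k),u\rangle \lra \|u\|_{X_0}^p - \mu\int_\Om\frac{|u|^p}{|x|^{p\al}}dx + t\int_\Om V^-|u|^p\,dx.
\]

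Substituting both computations into the hypothesis $\langle A_t(u_k),u_k-u\rangle\to 0$ reduces the problem to
\[
o(1) = \|u_k-u\|_{X_0}^p - \mu\int_\Om\frac{|u_k-u|^p}{|x|^{p\al}}dx + t\int_\Om V^-|u_k-u|^p\,dx.
\]
At this stage the fractional Hardy inequality \eqref{s1} applied to $u_k-u$ gives
\[
\|u_k-u\|_{X_0}^p - \mu\int_\Om\frac{|u_k-u|^p}{|x|^{p\al}}dx \geq \Bigl(1-\frac{\mu}{C_{n,\al,p}}\Bigr)\|u_k-u\|_{X_0}^p \geq 0,
\]
while the $V^-$ integral is manifestly non-negative. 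Thus a sum of three non-negative quantities equals $o(1)$, forcing each to tend to zero. Since $\mu<C_{n,\al,p}$, this yields $\|u_k-u\|_{X_0}\to 0$ together with $\int_\Om V^-|u_k-u|^p\,dx\to 0$, which jointly amount to $\|u_k-u\|_X\to 0$. The main technical obstacle I anticipate is the justification of the weak $L^{p/(p-1)}$ convergence of the $(p-1)$-homogeneous nonlinearities appearing in the cross term, which hinges on combining the a.e.\ convergence with the uniform $L^{p/(p-1)}$ bounds supplied by the Hardy inequality.
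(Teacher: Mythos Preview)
Your argument is correct and rests on the same two pillars as the paper's proof---Brezis--Lieb expansions and the fractional Hardy inequality \eqref{s1}---but the bookkeeping is organized differently. The paper first symmetrizes to $\langle A_t(u_k)-A_t(u),\,u_k-u\rangle\to 0$ (using that $A_t(u)\in X^*$ and $u_k\rightharpoonup u$), then isolates the $V^-$ contribution and bounds it from below by zero via a direct H\"older computation,
\[
\int_\Om V^-\bigl(|u_k|^{p-2}u_k-|u|^{p-2}u\bigr)(u_k-u)\,dx\ \ge\ 0,
\]
and only afterwards invokes Brezis--Lieb on the Gagliardo and Hardy pieces to reach the inequality $o(1)\ge\bigl(1-\mu/C_{n,\al,p}\bigr)\|u_k-u\|_{X_0}^p$; strong convergence of $\int_\Om V^-|u_k|^p$ is then deduced \emph{a posteriori} from the $X_0$-convergence together with the displayed H\"older inequality. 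Your route instead keeps the asymmetric form $\langle A_t(u_k),u_k-u\rangle$, applies Brezis--Lieb uniformly in all three weighted $L^p$ spaces for the diagonal part, and handles the cross part $\langle A_t(u_k),u\rangle$ through weak $L^{p/(p-1)}$ convergence of the $(p-1)$-homogeneous nonlinearities. The payoff of your organization is a more transparent and symmetric treatment of the three terms, with the $V^-$ piece obtained directly rather than in a second step; the paper's H\"older trick has the minor advantage of not needing a.e.\ convergence for that particular piece. One small wording slip: after applying Hardy you have a sum of \emph{two} non-negative quantities (the combined Gagliardo--Hardy block and the $V^-$ integral) equalling $o(1)$, not three; the conclusion is unaffected.
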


\begin{proof}
Clearly, using the weak convergence of $u_k$ and given hypothesis, we have
\begin{align}\label{a1}
\langle A_{t}(u_k)- A_{t}(u), u_k-u\rangle \ra 0.
\end{align}
Now, using the H\"{o}lder inequality, we obtain
{\small\begin{align}
\int_{\Om}& V^-(|u_k|^{p-2} u_k -|u|^{p-2} u)(u_k-u) dx = \int_{\Om} V^- (|u_k|^{p}+|u|^p - |u_k|^{p-2}u_k u -|u|^{p-2} u u_k) dx\notag\\
&\geq \int_{\Om} V^- (|u_k|^{p}+|u|^p)- \left(\int_{\Om} V^- |u_k|^{p}\right)^{\frac{p-1}{p}} \left(\int_{\Om} V^-|u|^{p}\right)^{\frac{1}{p}}- \left(\int_{\Om} V^- |u|^{p}\right)^{\frac{p-1}{p}}\left(\int_{\Om} V^- |u_k|^{p}\right)^{\frac{1}{p}}\notag\\
&= \left[\left(\int_{\Om} V^- |u_k|^{p}\right)^{\frac{p-1}{p}}-\left(\int_{\Om} V^- |u|^{p}\right)^{\frac{p-1}{p}}\right]\left[\left(\int_{\Om} V^- |u_k|^{p}\right)^{\frac{1}{p}}-\left(\int_{\Om} V^- |u|^{p}\right)^{\frac{1}{p}}\right]\geq 0.\label{a2}
\end{align}}
Now, by Brezis-Lieb Lemma \eqref{bl1}, we have
\[\|u_k-u\|= \|u_k\|-\|u\|+ o(1)\]
\[\left\|\frac{u_k-u}{|x|^{\al}}\right\|_{p}^p= \left\|\frac{u_k}{|x|^{\al}}\right\|_{p}^p- \left\|\frac{u}{|x|^{\al}}\right\|_{p}^p+ o(1).\]
which implies that
\begin{align*}
o(1) &= \langle A_{t}(u_k)- A_{t}(u), u_k-u\rangle \\
&= \ld J^{\prime}_{\mu}(u_k)- J^{\prime}_{\mu}(u), (u_k-u)\rd + t\left(\int_{\Om} V^{-}(|u_k|^{p-2} u_k -|u|^{p-2} u)(u_k-u) dx\right)\\
&\geq \int_{Q}\frac{|(u_k-u)(x)-(u_k-u)(y)|^{p}}{|x-y|^{n+p\al}}dx dy-\mu \int_{\Om}\frac{|u_k-u|^{p}}{|x|^{p\al}} dx\\
&\geq \left(1-\frac{\mu}{c(n,p,\al)}\right)\|u_k-u_0\|+o(1),
\end{align*}
Thus $u_k\ra u$ strongly in $X_0$. Using this, \eqref{a1} and \eqref{a2}, we have
\[\int_{\Om} V^- |u_k|^{p} dx \ra \int_{\Om} V^- |u|^{p} dx.\]
Hence $u_k\ra u$ strongly in $X$.\QED
\end{proof}

We define $\la_k=\inf_{\ga(A)\geq k}\sup_{u\in A} J_{\mu}(u)$, where $A$ is a closed subset of $M$ such that $A=-A$ and $\ga(A)$ is the Krasnoselskii genus of $A$. Since $\{x\in \mb R^n; V(x)>0\}$ has positive measure, for each $k$ there is a set $A\subset M$ which is homeomorphic to the unit sphere $S^{k-1}\subset \mb R^k$ by an odd homeomorphism. Since $\ga(S^{k-1})=k,$ there exist sets of arbitrarily large genus and all $\la_k$ are all well-defined. Moreover, $\la_1=\inf_{u\in M} J_{\mu}(u)$. Hence $\la_1$ coincides with the first eigenvalue obtained in Theorem \ref{m1} and $\la_k\geq \la_1>0$ for all $k$.

As $\la_k$ is a critical value of $J_{\mu}|_M$, there exists a critical point $\phi_k$ with $J_{\mu}(\phi_k)= \la_k$. Hence $J_{\mu}^{\prime}(\phi_k)= \ba \psi^{\prime}(\phi_k)$, where $\ba$ is a Lagrange multipliers and is satisfied with $u=\phi_k$ and $\ba=\la$. Since $p J_{\mu}(\phi_k)= \langle J_{\mu}^{\prime}(\phi_k), \phi_k\rangle= \ba \langle \psi^{\prime}(\phi_k), \phi_k\rangle=p\ba$, we have $\ba=J_{\mu}(\phi_k) =\la_k$, so $\la_k$ is an eigenvalue and $\phi_k$ is corresponding eigenfunction.

\begin{Theorem}\label{m3}
Under assumption $(H_p)$, $J_{\mu}|M$ has a sequence of critical points $\phi_k$ with corresponding critical value $\la_k=\int_{Q} \frac{|u(x)-u(y)|^p}{|x-y|^{n+p\al}}dxdy$. Moreover, each $\phi_k$ is an eigenfunction of \eqref{p2}, $\la_k$ is an associated eigenvalue, and $\la_k\ra\infty$ as $k\ra\infty$.
\end{Theorem}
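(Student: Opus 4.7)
The plan is to apply the abstract Ljusternik--Schnirelman critical point theorem on the $C^1$-manifold $M\subset X$ with the minimax construction $\la_k=\inf_{\ga(A)\geq k}\sup_{u\in A}J_{\mu}(u)$ introduced above, and then read off the eigenvalue/eigenfunction interpretation from the Lagrange-multiplier computation already sketched in the paragraph preceding the statement. Four items need to be checked: (i)~$M$ is genuinely a $C^1$-manifold and $\tilde{J}_\mu$ is even, $C^1$ and bounded below; (ii)~$\tilde{J}_\mu$ is coercive on $M$ with respect to $\|\cdot\|_X$; (iii)~$\tilde{J}_\mu$ satisfies the Palais--Smale condition at every positive level; and (iv)~the minimax values satisfy $\la_k\ra\infty$.

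Items (i) and (ii) are short. For (i), $\Psi'(u)\ne 0$ on $M$ by $p$-homogeneity ($\ld\Psi'(u),u\rd=p\Psi(u)=p$), both $J_\mu$ and $\Psi$ are even, and $\tilde J_\mu\geq \la_1>0$ by Theorem \ref{m1}. For (ii), the inequality $\Psi^+(u)\leq c\,J_\mu(u)$ of the previous lemma combined with $\Psi^-(u)=\Psi^+(u)-1$ on $M$ yields $\|u\|_X^p\leq c_1 J_\mu(u)+c_2$, after invoking the equivalence of $R(u)$ with $\|\cdot\|_{X_0}$.

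For (iii), I would take $\{u_k\}\subset M$ with $J_\mu(u_k)\ra c$ and $\|\tilde J_\mu'(u_k)\|_*\ra 0$. Writing out the Lagrange-multiplier condition, $J_\mu'(u_k)=\ba_k\Psi'(u_k)+\e_k$ with $\e_k\ra 0$ in $X^*$, and pairing with $u_k$ together with $p$-homogeneity forces $\ba_k=J_\mu(u_k)\ra c$. Splitting $\Psi'=(\Psi^+)'-(\Psi^-)'$ and rearranging shows $p\,A_{\ba_k}(u_k)=\ba_k(\Psi^+)'(u_k)+\e_k$, where $A_t$ is the operator of Lemma \ref{le1}. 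Coercivity bounds $\{u_k\}$ in $X$; passing to a subsequence $u_k\rightharpoonup u$, the complete continuity of $(\Psi^+)'$ gives $\ld(\Psi^+)'(u_k),u_k-u\rd\ra 0$, and since $\ba_k\ra c$ one concludes $\ld A_c(u_k),u_k-u\rd\ra 0$. Lemma \ref{le1} then delivers $u_k\ra u$ strongly in $X$. The standard deformation/genus machinery now produces critical points $\phi_k$ of $\tilde J_\mu$ at levels $\la_k$, and the Lagrange relation $J_\mu'(\phi_k)=\la_k\Psi'(\phi_k)$ identifies $\phi_k$ as a weak solution of \eqref{p2} with eigenvalue $\la_k$.

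The main obstacle is (iv). I would argue by contradiction: if $\la_k\leq C$ for all $k$, one can select closed symmetric sets $A_k\subset M$ with $\ga(A_k)\geq k$ and $\sup_{A_k}J_\mu\leq C+1$, so by (ii) all $A_k$ lie in a common ball of $X$. The odd continuous map $F(u):=(\Psi^+)'(u)/\|(\Psi^+)'(u)\|_{X^*}$ sends each $A_k$ into the unit sphere of $X^*$, and by complete continuity of $(\Psi^+)'$ the image $F(A_k)$ is relatively compact. Since the Krasnoselskii genus is monotone under odd continuous maps, and a compact symmetric subset of an infinite-dimensional Banach space not containing $0$ has finite genus, this forces $\ga(A_k)\leq \ga(\overline{F(A_k)})<\infty$ uniformly in $k$, contradicting $\ga(A_k)\geq k$ for large $k$. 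The delicate point is verifying $(\Psi^+)'(u)\ne 0$ on $\bigcup_k A_k$; this holds because $(\Psi^+)'(u)=0$ would give $\Psi^+(u)=0$ and then $\Psi^-(u)=\Psi^+(u)-1=-1<0$ on $M$, which is impossible.
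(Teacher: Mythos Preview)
Your verification of the Palais--Smale condition (items (i)--(iii)) is essentially the same as the paper's: both argue boundedness in $X$ from $\Psi^-(u_k)=\Psi^+(u_k)-1$ together with $\Psi^+\leq cJ_\mu$, extract the Lagrange multipliers $\beta_k\to c>0$ by pairing with $u_k$, rewrite the PS relation as $pA_{\beta_k}(u_k)=\beta_k(\Psi^+)'(u_k)+\e_k$, and then invoke Lemma~\ref{le1} and the complete continuity of $(\Psi^+)'$ to obtain strong convergence.

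The genuine difference is in (iv). The paper does \emph{not} argue $\la_k\to\infty$ via genus and compactness; it invokes a comparison $\la_k\geq C\la_k^0$ with the variational eigenvalues $\la_k^0$ of the operator $L_0$ (the case $\mu=0$), and then cites an external result (Proposition~2.2 in \cite{as}) for $\la_k^0\to\infty$. Your argument is instead intrinsic: use the completely continuous odd map $(\Psi^+)'$ to push the sets $A_k$ into a single compact symmetric set in $X^*\setminus\{0\}$, whose finite genus bounds all $\ga(A_k)$. This is correct and more self-contained; it avoids the need to know anything about the $\mu=0$ problem or to import results from another paper. The paper's route is shorter to write but leans on outside machinery. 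One point you should make explicit: to conclude that $F(\bigcup_k A_k)$ is relatively compact you need $\|(\Psi^+)'(u)\|_{X^*}$ bounded \emph{away from zero} on $\bigcup_k A_k$, not merely nonvanishing. This follows from the same contradiction you sketch (if $\|(\Psi^+)'(u_n)\|\to0$ along a bounded sequence, weak compactness and complete continuity give $\Psi^+(u_n)\to0$, hence $\Psi^-(u_n)\to-1<0$), but it deserves a sentence.
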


\begin{proof}
Let $\{u_k\}$ be a Palaise-Smale sequence. Then there exists $\eta_k\in \mb R^n$ such that
\begin{align}\label{a}
J_{\mu}^{\prime}(u_k) -\eta_k \Psi^{\prime}(u_k)\ra 0.
\end{align}
Since $J_{\mu}(u_k)$ is bounded, so is $\Psi^{+}(u_k)$ according to Lemma  and therefore
\begin{align}\label{b}
\Psi^{-}(u_k) = \Psi^{+}(u_{k})-1
\end{align}
is bounded. Hence $\|u_k\|_{X}^p= J_{\mu}(u_k)+\Psi^{-}(u_k)$
is bounded and we assume passing to a subsequence that $u_k \rightharpoonup u$ weakly in $X$. Since $(\Psi^{+})^{\prime}$ is completely continuous, $\Psi^{+}(u_k)\ra \Psi^{+}(u)$ and it follows from \eqref{b} that $u\ne 0$. By \eqref{a}
\begin{align}
p(J_{\mu}(u_k)-\eta_k)=\langle J^{\prime}_{\mu}(u_k), u_k \rangle -\eta_{k}\langle \Psi^{\prime}(u_k), u_k \rangle\ra 0.
\end{align}
Therefore $\eta_k$ is bounded and we assume that $\eta_k \ra \eta$. Moreover taking the limit in above, we obtain $0<J_{\mu}(u)\leq \eta$, so $\eta>0$. Now we may rewrite \ref{a}, as
\[A_{\eta_k}(u_k) - \eta_k (\Psi^{+})^{\prime}(u_k) \ra 0.\]
Since $A_{\eta_k}(u_k) - A_{\eta}(u_k) \ra 0$, as it  can be easily seen from the definition of $A_{\eta}$ and since $(\Psi^{+})^{\prime}(u_k)\ra (\Psi^{+})^{\prime}(u)$. It follows that $A_{\mu}(u_k)$ is strongly convergent. So, $\la A_{\eta}(u_k), u_k-u\ra 0$ and $u_k \ra u$ according to Lemma \ref{le1}. Thus we have shown that $J_{\mu}|_M$ satisfy the Palaise-Smale condition. It follows from our earlier  discussion that each $\la_k$ is  a critical value of $J_{\mu}|M$ and an eigenvalue of the problem . Since $\la_k\geq C \la_{k}^{0}$ are eigenvalues of $L_{0}$, we have $\la_k\ra \infty$ as $k\ra\infty$ see Proposition of 2.2 in \cite{as}.
\end{proof}


 \end{document}